\DeclareMathSymbol{\subsetneqq}{\mathbin}{AMSb}{36}
\newcommand{\R}{\mathbb{R}}
\newcommand{\N}{\mathbb{N}}
\newcommand{\C}{\mathbb{C}}
\newcommand{\beq}{\begin{eqnarray}}
\newcommand{\eeq}{\end{eqnarray}}
\newcommand{\bq}{\begin{equation}}
\newcommand{\eq}{\end{equation}}
\newcommand{\beqn}{\begin{eqnarray*}}
\newcommand{\eeqn}{\end{eqnarray*}}
\newcommand{\bex}{\begin{exo}}
\newcommand{\eex}{\end{exo}}
\newcommand{\ben}{\begin{enumerate}}
\newcommand{\een}{\end{enumerate}}
\newtheorem{th1}{{\bf Theorem}}[section]
\newtheorem{thm}[th1]{{\bf Theorem}}
\newtheorem{lem}[th1]{{\bf Lemma}}
\newtheorem{prop}[th1]{{\bf Proposition}}
\newtheorem{rem}[th1]{\bf Remark}
\newtheorem{defi}[th1]{\bf Definition}
\author[T. Saanouni]{T. Saanouni}
\address{University Tunis El Manar,
Faculty of Sciences of Tunis, LR03ES04 partial differential equations and applications, 2092 Tunis, Tunisia.}
\email{\sl Tarek.saanouni@ipeiem.rnu.tn}
\subjclass{35Q55}
\keywords{Nonlinear Schr\"odinger system, ground state, potential well, global existence, blow-up, instability.}
\title[Coupled NLS]{A note on coupled focusing nonlinear Schr\"odinger equations}
\date{\today}
\begin{document}
\begin{abstract}
Some focusing coupled Schr\"odinger equations are investigated. First, existence of ground state is obtained. Second, global and non global existence of solutions are discussed via potential-well method. Finally, strong instability of standing waves is established. 
\end{abstract}
\maketitle
\tableofcontents
\vspace{ 1\baselineskip}
\renewcommand{\theequation}{\thesection.\arabic{equation}}
\section{Introduction}
Consider the Cauchy problem for a focusing Schr\"odinger system with power-type nonlinearities
\begin{equation}
\left\{
\begin{array}{ll}
i\dot u_j +\Delta  u_j=  -\displaystyle\sum_{k=1}^{m}a_{jk}|u_k|^p|u_j|^{p-2}u_j ;\\
u_j(0,.)= \psi_{j},
\label{S}
\end{array}
\right.
\end{equation}
where $u_j: \R^{N} \times \R \to \C$, for $j\in[1,m]$ and $a_{jk} =a_{kj}$ are positive real numbers.\\
The m-component coupled nonlinear Schr\"odinger system with power-type nonlinearities
denoted $(CNLS)_p$ arises in many physical problems such as nonlinear optics and Bose-Einstein condensates. It models physical systems in which the field has more than one component. In nonlinear optics \cite{ak} $u_j$ denotes the $j^{th}$ component of the beam in Kerr-like photo-refractive media. The coupling constant $a_{jk}$ acts to the interaction between the $j^{th}$ and the $k^{th}$ components of the beam. $(CNLS)_p$ arises also in the Hartree-Fock theory for a two component Bose-Einstein condensate. 
 Readers are referred, for instance, to \cite{Hasegawa, Zakharov} for the derivation and applications of this system.\\

A solution ${\bf u}:= (u_1,...,u_m)$ to \eqref{S} formally satisfies respectively conservation of the mass and the energy
\begin{gather*}
M(u_j):= \displaystyle\int_{\R^N}|u_j(t,x)|^2\,dx = M(\psi_{j});\\
E({\bf u}(t)):= \frac{1}{2}\displaystyle \sum_{j=1}^{m}\displaystyle\int_{\R^N}|\nabla u_j(t,x)|^2\,dx - \frac{1}{2p}\displaystyle \sum_{j,k=1}^{m}a_{jk}\displaystyle \int_{\R^N} |u_j(t,x)u_k(t,x)|^p\,dx = E({\bf u}(0)).
\end{gather*}
Before going further, let us recall some historic facts about this problem. 
For the one component Schr\"odinger equation, the model case given by a pure power nonlinearity is of particular interest. The question of well-posedness in the energy space $H^1$ was widely investigated. We denote for $p>1$ the Schr\"odinger problem
$$(NLS)_p\quad i\dot  u+\Delta u\pm u|u|^{p-1}=0,\quad u:{\mathbb R}\times{\mathbb R}^N\rightarrow{\mathbb C}.$$
This equation satisfies a scaling invariance. Indeed, if $u$ is a solution to $(NLS)_p$ with data $u_0$, then
$ u_\lambda:=\lambda^{\frac2{p-1}}u(\lambda^2\, .\,,\lambda\, .\,)$
is a solution to $(NLS)_p$ with data $\lambda^{\frac2{p-1}}u_0(\lambda\,.\,).$
For $s_c:=\frac N2-\frac2{p-1}$, the space $\dot H^{s_c}$ whose norm is invariant under the dilatation $u\mapsto u_{\lambda}$ is relevant in this theory. When $s_c=1$, which is the energy critical case, the critical power is $p_c:=\frac{N+2}{N-2}$, $N\geq 3$. \\
Local well-posedness holds in the energy critical case \cite{Cas.F} and the local existence interval does not depend only on $\|u_0\|_{H^1}$. Then, an iteration of the local well-posedness theory fails to prove global existence. But using ideas of Bourgain \cite{J.B1,J.B2} and a new interaction Morawetz inequality \cite{Col.K} the energy critical case of $(NLS)_p$ is now completely resolved \cite{V,RV07}. Finite energy initial data evolve into global solution $u$ with finite spacetime size $\|u\|_{L_{t,x}^{\frac{2(2+N)}{N-2}}}<\infty$ and scatter.\\

In two space dimensions, similar results about global well-posedness and scattering of the Schr\"odinger equation with exponential nonlinearity exist \cite{T,T1,T2,T3}.\\

Intensive work has been done in the last few years about coupled Schr\"odinger systems \cite{ntds,w,mz}. These works have been mainly on 2-systems or with small couplings. Moreover, most works treat the focusing case by considering the stationary associated problem \cite{AC2,xs,hs,AC3,AC4}. Despite the partial progress made so far, many difficult questions remain open and little is known about m-systems for $m\geq 3$.\\

Recently, the defocusing problem associated to the Schr\"odinger system \eqref{S} was investigated in \cite{saa}, where global existence and scattering were obtained.\\

The purpose of this manuscript is two-fold. First, by obtaining existence of a ground state, global and non global well-posedness of the system \eqref{S} is discussed via potential well method. Second, using variational methods, we study the stability of standing waves.\\

 The rest of this paper is organized as follows. The next section contains the main results and some technical tools needed in the sequel. The goal of the third section is to study the stationary problem associated to \eqref{S}. In section four, global and non global existence of solutions is discussed via the potential-well theory. The last section is devoted to study the stability of standing waves.\\

We end this section with some definitions. Let the product space 
$$H:={H^1({\R^N})\times...\times H^1({\R^N})}=[H^1({\R^N})]^m$$
where $H^1(\R^N)$ is the usual Sobolev space endowed with the complete norm 
$$ \|u\|_{H^1(\R^N)} := \Big(\|u\|_{L^2(\R^N)}^2 + \|\nabla u\|_{L^2(\R^N)}^2\Big)^\frac12.$$
We denote the real numbers 
 $$p_*:=1+\frac2N\quad\mbox{ and }\quad p^*:=\left\{
\begin{array}{ll}
\frac{N}{N-2}\quad\mbox{if}\quad N>2;\\
\infty\quad\mbox{if}\quad N=2.
\end{array}
\right.$$
We mention that $C$ will denote a
constant which may vary from line to line and if $A$ and $B$ are non negative real numbers, $A\lesssim B$  means that $A\leq CB$. For $1\leq r\leq\infty$, we denote the Lebesgue space $L^r:=L^r({\mathbb R}^N)$ with the usual norm $\|\,.\,\|_r:=\|\,.\,\|_{L^r}$ and $\|\,.\,\|:=\|\,.\,\|_2$.
\, For simplicity, we denote the usual Sobolev Space $W^{s,p}:=W^{s,p}({\mathbb R}^N)$ and $H^s:=W^{s,2}$. If $X$ is an abstract space $C_T(X):=C([0,T],X)$ stands for the set of continuous functions valued in $X$ and $X_{rd}$ is the set of radial elements in $X$, moreover for an eventual solution to \eqref{S}, we denote $T^*>0$ it's lifespan.
\section{Main results and background}
In what follows, we give the main results and some estimates needed in the sequel.
 For ${\bf u} :=(u_1,...,u_m)\in H$, we define the action
$$S({\bf u}):= \frac{1}{2}\displaystyle\sum_{j=1}^m\| u_j\|_{H^1}^2-\frac{1}{2p}\displaystyle\sum_{j,k=1}^m a_{jk} \displaystyle\int_{\R^N}|u_j u_k|^{p}\,dx.$$
If $ \alpha, \,\beta\in \R,$ we call constraint
{\small$${2K_{\alpha,\beta}({\bf u}):=\displaystyle\sum_{j=1}^m\big((2\alpha + (N -2)\beta) \|\nabla u_j\|^2 + (2\alpha + N \beta) \| u_j\|^2   \big) - \frac{1}{p}\displaystyle\sum_{j,k=1}^m a_{jk}\displaystyle\int_{\R^N}(2p\alpha + N \beta)|u_j u_k|^{p}\,dx.}$$}
\begin{defi}
We say that $\Psi:=(\psi_1,...,\psi_m)$ is a ground state solution to \eqref{S} if
\begin{equation}\label{E}
\Delta  \psi_j - \psi_j + \displaystyle\sum_{k=1}^m a_{jk}|\psi_k|^p|\psi_j|^{p - 2}\psi_j=0,\quad 0\neq \Psi\in H_{rd}
\end{equation}
and it minimizes the problem
\begin{equation}\label{M}
m_{\alpha,\beta}:= \inf_{0\neq {\bf u}\in H}\{ S({\bf u})\quad\mbox{s.\,t}\quad K_{\alpha,\beta}({\bf u}) = 0\}.
\end{equation}
Moreover, in such a case $\Psi$ is called vector ground state if each component is nonzero.
\end{defi}
\begin{rem}
If $\Psi\in H$ is a solution to \eqref{E}, then $e^{it}\Psi$ is a global solution of \eqref{S} said standing wave.
\end{rem}
\subsection{Main results}
First, the existence of a ground state solution to \eqref{S} is claimed.
\begin{thm}\label{t1}
Take $N\geq2$, $p_*<p<p^*$ and two real numbers $(0, 0)\, \neq \, (\alpha,\beta)\in \R_{+}^2\cup\{1,-\frac2N\}.$ Then
\begin{enumerate}
\item $ m:=m_{\alpha,\beta}$ is nonzero and independent of $(\alpha,\beta);$
\item there is a minimizer of \eqref{M}, which is some nontrivial solution to \eqref{E};
\item
if we make the following assumptions
$$a_{jj} = \mu_j\; \mbox{ and }\; a_{jk} = \mu \quad\mbox{for}\quad j\neq k\in[1,m]$$
then, at least two components of the minimizer are non zero if $\mu>0$ is large enough.
\end{enumerate}
\end{thm}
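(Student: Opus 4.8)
The plan is to prove the three parts essentially in sequence, leaning on the standard potential-well / concentration-compactness machinery. For part (1), the key observation is that the constraint $K_{\alpha,\beta}(\mathbf{u})=0$ can be rescaled: for a fixed $\mathbf{u}$ consider the scaling $\mathbf{u}^{\lambda,\mu}_j(x):=\lambda u_j(\mu x)$ (or the specific one-parameter family adapted to the two admissible choices of $(\alpha,\beta)$), and observe that along such a family the map $t\mapsto S(\mathbf{u}^{t})$ has a unique positive critical point where $K_{\alpha,\beta}=0$, because $S$ is a difference of a quadratic term and a term homogeneous of degree $2p>2$ in the nonlinearity. This shows $m_{\alpha,\beta}>0$: a nonzero element on the constraint manifold cannot have arbitrarily small $H^1$-norm, by Gagliardo--Nirenberg (valid since $p_*<p<p^*$). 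To see $m_{\alpha,\beta}$ is independent of $(\alpha,\beta)$, I would show that on the constraint $K_{\alpha,\beta}(\mathbf{u})=0$ the action reduces to a fixed positive multiple of a $(\alpha,\beta)$-independent functional (e.g. the $L^{2p}$-type coupling term, or $\sum_j\|u_j\|_{H^1}^2$), so that after the above rescaling the infimum is the same for both admissible cases $(\alpha,\beta)\in\R_+^2$ and $(\alpha,\beta)=(1,-\tfrac2N)$. Hence $m:=m_{\alpha,\beta}$ is well defined.

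For part (2), I would run the direct method. Take a minimizing sequence $\mathbf{u}^{(n)}$ with $K_{\alpha,\beta}(\mathbf{u}^{(n)})=0$ and $S(\mathbf{u}^{(n)})\to m$; using the reduction from part (1), $S(\mathbf{u}^{(n)})$ controls $\sum_j\|u_j^{(n)}\|_{H^1}^2$, so the sequence is bounded in $H$. Since we work in $H_{rd}$, the radial Strauss compact embedding $H^1_{rd}(\R^N)\hookrightarrow L^q(\R^N)$ for $2<q<2^*$ gives, up to a subsequence, strong convergence of each component in $L^{2p}$ (note $2p<2^*=2p^*$ by hypothesis, after checking $2<2p$), hence convergence of the nonlinear coupling term. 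Weak lower semicontinuity of the quadratic part then forces the weak limit $\Psi$ to be nonzero (else the nonlinear term would vanish, contradicting $K_{\alpha,\beta}=0$ together with $m>0$), to satisfy $K_{\alpha,\beta}(\Psi)\le 0$, and — by the uniqueness of the critical point of the scaling from part (1) — actually $K_{\alpha,\beta}(\Psi)=0$ with $S(\Psi)=m$. A Lagrange-multiplier argument at the minimizer, combined with the Pohozaev/virial identity which is exactly the statement that the solution of \eqref{E} satisfies $K_{1,-2/N}=0$, shows the multiplier is such that $\Psi$ solves \eqref{E} (this is where the freedom to pick $(\alpha,\beta)$ pays off: one uses two different constraints to kill the multiplier).

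For part (3), with $a_{jj}=\mu_j$ and $a_{jk}=\mu$ for $j\ne k$, I would argue by contradiction and by comparison of energy levels. Suppose the minimizer $\Psi$ had only one nonzero component, say $\psi_1$; then $\Psi$ is essentially a (rescaled) ground state of the scalar equation with coefficient $\mu_1$, so $m=m(\mu_1)$ is a fixed number not depending on $\mu$. On the other hand, build a competitor $\mathbf{v}=(v,\dots,v)$ with all components equal to a common profile $v$ (or to a suitable scalar ground state), projected onto the constraint by the scaling of part (1); its action is a fixed expression in which the coupling contributes a term proportional to $\binom{m}{2}\mu\int|v|^{2p}$ with a negative sign in the energy and hence, after the rescaling onto $K_{\alpha,\beta}=0$, drives $S(\mathbf{v})$ to $0$ as $\mu\to\infty$. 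Thus for $\mu$ large, $m\le S(\mathbf{v})<m(\mu_1)$, contradicting minimality of the one-component configuration. Therefore at least two components of the minimizer are nonzero once $\mu$ is large enough.

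The main obstacle I anticipate is part (2): ensuring the weak limit is genuinely nonzero and that the constraint is preserved in the limit (rather than only $K_{\alpha,\beta}(\Psi)\le0$), and then extracting the Euler--Lagrange equation \eqref{E} with the correct normalization from the Lagrange multiplier — i.e. ruling out a spurious multiplier — which is precisely where one exploits the fact that the same infimum $m$ is attained for the two different constraints from part (1). The radial setting removes the usual translation-non-compactness, so no full concentration-compactness dichotomy is needed; this is the simplification that makes the direct method work cleanly here.
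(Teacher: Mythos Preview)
Your overall architecture---direct method with radial compactness, scaling onto the constraint, Lagrange multiplier, and a competitor argument for part (3)---matches the paper's. But two steps in your execution do not go through as written.

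\textbf{Independence of $(\alpha,\beta)$.} You claim that on the constraint $K_{\alpha,\beta}=0$ the action $S$ reduces to a fixed multiple of an $(\alpha,\beta)$-independent quantity. It does not: eliminating the coupling term from $S$ using $K_{\alpha,\beta}=0$ leaves a combination of $\sum_j\|\nabla u_j\|^2$ and $\sum_j\|u_j\|^2$ whose coefficients genuinely depend on $(\alpha,\beta)$, so the constrained functional is different for different $(\alpha,\beta)$. The paper obtains independence only \emph{a posteriori}: once the minimizer $\phi$ is shown to satisfy $S'(\phi)=0$, the generalized Pohozaev identity gives $K_{\alpha',\beta'}(\phi)=0$ for \emph{all} $(\alpha',\beta')$, so the same $\phi$ is admissible for every constraint and $m_{\alpha,\beta}=S(\phi)$ is the common value. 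You should move the independence claim after the Euler--Lagrange step, not before.

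\textbf{Killing the Lagrange multiplier.} Your plan is to ``use two different constraints to kill the multiplier'', invoking Pohozaev. This is circular: Pohozaev tells you that \emph{solutions of \eqref{E}} satisfy $K_{1,-2/N}=0$, but at this stage you do not yet know $\Psi$ solves \eqref{E}---that is exactly what you are trying to prove. The paper's mechanism is different and self-contained: from $S'(\phi)=\eta K'_{\alpha,\beta}(\phi)$ one pairs with the generator $\pounds_{\alpha,\beta}\phi$ of the scaling to get $0=K_{\alpha,\beta}(\phi)=\eta\,\pounds_{\alpha,\beta}^2 S(\phi)$, and then a direct computation shows $\pounds_{\alpha,\beta}^2 S(\phi)<0$ (using $p>p_*$), forcing $\eta=0$. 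In the exceptional case $(\alpha,N)=(0,2)$ this computation degenerates and the paper handles it separately by a sign argument and a final dilation; your outline does not account for this case splitting.

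Your part (3) is essentially the paper's argument. The paper's competitor is $(\phi_1(\cdot/t),\dots,\phi_m(\cdot/t))$ with each $\phi_j$ a scalar ground state rather than your diagonal $(v,\dots,v)$, and the projection onto the constraint is done via the $(\alpha,\beta)=(0,1)$ scaling, but the conclusion---that the constrained action of the competitor tends to $0$ as $\mu\to\infty$, contradicting the $\mu$-independent value forced by a one-component minimizer---is the same.
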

Second, using the potential well method \cite{ps}, we discuss global and non global existence of a solution to the focusing problem \eqref{S}. Define the sets
\begin{gather*}
A_{\alpha,\beta}^+:= \{ {\bf u}\in H \quad\mbox{s.\, t}\quad S({\bf u})<m\quad\mbox{and}\quad K_{\alpha,\beta}({\bf u})\geq 0\};\\
A_{\alpha,\beta}^-:= \{ {\bf u}\in H \quad\mbox{s.\, t}\quad S({\bf u})<m\quad\mbox{and}\quad K_{\alpha,\beta}({\bf u})< 0\}.
\end{gather*}
\begin{thm}\label{t2}
Take $2\leq N\leq 4$ and $ p_*< p< p^*.$ Let $\Psi \in H$ and ${\bf u}\in C_{T^*}(H)$ the maximal solution to \eqref{S}. 
\begin{enumerate}
\item
If there exist $(0,0)\neq (\alpha,\beta)\in \R_+^2\cup\{1,-\frac2N\} $ and $t_0\in  [0, T^*)$ such that 
${\bf u}(t_0)\in A_{\alpha,\beta}^+$, then ${\bf u}$ is global;
\item
if there exist $(0,0)\neq (\alpha,\beta)\in \R_+^2\cup\{1,-\frac2N\} $ and $t_0\in  [0, T^*)$ such that 
${\bf u}(t_0)\in A_{\alpha,\beta}^-$ and $x{\bf u}(t_0)\in L^2$, then ${\bf u}$ is non global;
\end{enumerate}
\end{thm}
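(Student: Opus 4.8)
The plan is to run the standard Payne--Sattinger potential-well dichotomy, adapted to the system \eqref{S}, using the fact established in Theorem \ref{t1} that $m=m_{\alpha,\beta}$ is independent of the admissible pair $(\alpha,\beta)$. The central observation is that the sets $A_{\alpha,\beta}^{\pm}$ are \emph{invariant} under the flow of \eqref{S}. Indeed, by conservation of mass and energy one has $S(\mathbf{u}(t))\le E(\mathbf{u}(0))+\frac12\sum_j M(\psi_j)$ constant along the flow (more precisely $S$ splits into the conserved energy plus conserved masses), so the condition $S(\mathbf{u}(t))<m$ persists in time. To see that the sign of $K_{\alpha,\beta}(\mathbf{u}(t))$ cannot change, suppose $\mathbf{u}(t_0)\in A_{\alpha,\beta}^+$ but $K_{\alpha,\beta}(\mathbf{u}(t_1))<0$ for some later $t_1$; by continuity of $t\mapsto K_{\alpha,\beta}(\mathbf{u}(t))$ there is an intermediate time $\bar t$ with $K_{\alpha,\beta}(\mathbf{u}(\bar t))=0$ and $\mathbf{u}(\bar t)\ne 0$, so by definition of $m$ in \eqref{M} we get $S(\mathbf{u}(\bar t))\ge m$, contradicting $S(\mathbf{u}(\bar t))<m$. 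The same argument gives invariance of $A_{\alpha,\beta}^-$, once one checks $\mathbf{u}=0$ cannot be approached, which follows from the sub-$m$ energy level. Hence throughout one may work with a fixed convenient choice of $(\alpha,\beta)$, and the freedom to change it is exactly what makes both parts go through with the same constant $m$.

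For part (1), I would fix $(\alpha,\beta)=(1,0)$ (or another pair in $\R_+^2$ giving a coercive quadratic part), so that $2K_{1,0}(\mathbf{u})=\sum_j(\|\nabla u_j\|^2+\|u_j\|^2)-\frac1p\sum_{j,k}a_{jk}\|u_ju_k\|_p^p \cdot(\text{const})$; then on $A_{1,0}^+$ one shows the $H$-norm stays bounded. Concretely, write $S(\mathbf{u})-\tfrac1{2p}K_{1,0}(\mathbf{u})$ (or the appropriate linear combination) so that the nonlinear term is eliminated and one is left with $S(\mathbf{u})\ge c\sum_j\|u_j\|_{H^1}^2$ on the set where $K_{1,0}\ge 0$, for some $c=c(p)>0$ coming from $1/2-1/(2p)>0$. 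Since $S(\mathbf{u}(t))<m<\infty$ is conserved, this bounds $\sum_j\|u_j(t)\|_{H^1}^2$ uniformly in $t\in[t_0,T^*)$, and the blow-up alternative for \eqref{S} in the energy space (which holds in the range $2\le N\le 4$, $p_*<p<p^*$, where \eqref{S} is $H^1$-subcritical/locally well-posed — this is where the dimension restriction is used) forces $T^*=\infty$. One also runs the flow backward from $t_0$ by time-reversal symmetry.

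For part (2), I would instead use the dilation generator $(\alpha,\beta)=(1,-\tfrac2N)$, for which $2K_{1,-2/N}(\mathbf{u})=\sum_j\big(\tfrac4N\|\nabla u_j\|^2\big)-\sum_{j,k}a_{jk}\big(p-1-\tfrac2N\big)\|u_ju_k\|_p^p$ is precisely (a multiple of) the quantity appearing in the virial identity $\frac{d^2}{dt^2}\sum_j\int|x|^2|u_j|^2\,dx = 8\sum_j\|\nabla u_j\|^2 - \tfrac{2N(p-1)}{p}\sum_{j,k}a_{jk}\|u_ju_k\|_p^p = c\,K_{1,-2/N}(\mathbf{u})$ for a positive constant $c$ (here $p>p_*$ guarantees the exponent is positive). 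With $x\mathbf{u}(t_0)\in L^2$ the variance is finite and, by the local theory, $C^2$ on $[0,T^*)$; by invariance of $A_{1,-2/N}^-$ we have $K_{1,-2/N}(\mathbf{u}(t))<0$ for all $t\ge t_0$. I then need the sharper statement that $K_{1,-2/N}(\mathbf{u}(t))$ stays bounded \emph{away} from zero — a coercivity estimate of the form $K_{1,-2/N}(\mathbf{u}(t))\le -\delta<0$ for $t\ge t_0$, obtained by combining $S(\mathbf{u}(t))<m$ with the characterization of $m$; concretely one shows $S(\mathbf{u})-\tfrac{1}{N(p-1)-2}K_{1,-2/N}(\mathbf{u})\ge m$ would follow on the boundary, so on $A^-$ the gap $m-S(\mathbf{u})>0$ transfers to a gap for $K$. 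Feeding $\frac{d^2}{dt^2}\sum_j\||x|u_j\|^2 \le -c\delta<0$ into a convexity argument makes the nonnegative variance reach zero in finite time, contradicting $T^*=\infty$; hence $T^*<\infty$.

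The main obstacle I anticipate is not the convexity/virial mechanics but the coercivity step in part (2): upgrading the \emph{strict sign} $K_{\alpha,\beta}(\mathbf{u})<0$ to a \emph{uniform} negative bound along the whole trajectory, using only $S(\mathbf{u})<m$ and the variational characterization \eqref{M}. This requires a careful scaling argument — rescaling $\mathbf{u}(t)$ to sit on the constraint surface $\{K_{\alpha,\beta}=0\}$ and reading off from $m$ a quantitative lower bound — and is exactly the place where the independence of $m$ on $(\alpha,\beta)$ from Theorem \ref{t1} is indispensable. A secondary technical point is justifying the virial identity rigorously (the finite-variance class is not obviously preserved and one typically needs a truncation/approximation argument), but in the stated dimension range $2\le N\le 4$ this is standard.
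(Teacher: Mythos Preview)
Your proposal is correct and follows essentially the same route as the paper: invariance of $A_{\alpha,\beta}^{\pm}$ by the continuity argument, an $H^1$ bound on $A^+$ from a linear combination $S-cK$ (the paper happens to pick $(\alpha,\beta)=(1,1)$ rather than your $(1,0)$, but either works), and blow-up on $A^-$ via the virial identity for $(\alpha,\beta)=(1,-\tfrac2N)$ once a uniform gap $K_{1,-2/N}\le-\delta$ is established. For that last coercivity step the paper argues by contradiction: if $K_{1,-2/N}({\bf u}(t_n))\to 0$ along some sequence, then the alternative characterization $m=\inf\{S-\tfrac4N K_{1,-2/N}:K_{1,-2/N}\le 0\}$ forces $m\le S({\bf u}(t_n))-\tfrac4N K_{1,-2/N}({\bf u}(t_n))\to S({\bf u}(0))<m$; this is exactly the quantitative transfer ``$m-S>0\Rightarrow K\le -\delta$'' you anticipated, just packaged as a contradiction rather than via an explicit rescaling.
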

\begin{rem}
The existence of a local solution to \eqref{S} was proved in \cite{saa}.
\end{rem}
The last result concerns instability by blow-up for standing waves of the Schr\"odinger problem \eqref{S}. Indeed, near ground state, there exist infinitely many data giving finite time blowing-up solutions to \eqref{S}.
\begin{thm}\label{t3}
Take $2\leq N\leq 4$ and $ p_*< p< p^*.$ Let $\Psi$ be a ground state solution to \eqref{E}. Then, for any $\varepsilon>0$, there exists ${\bf u}_0\in H$ such that $\|{\bf u}_0-\Psi\|_{H}<\varepsilon$ and the maximal solution to \eqref{S} with data ${\bf u}_0$ is not global.
\end{thm}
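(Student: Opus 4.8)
The plan is to exhibit the destabilising data as a one-parameter dilation of $\Psi$ and then invoke the blow-up criterion of Theorem~\ref{t2}(2). For $\lambda>0$ I would set $\Psi^\lambda:=\bigl(\lambda^{N/2}\psi_1(\lambda\,\cdot\,),\dots,\lambda^{N/2}\psi_m(\lambda\,\cdot\,)\bigr)$, the $L^2$-preserving rescaling of $\Psi$, and begin by recording that $\lambda\mapsto\Psi^\lambda$ is continuous from $(0,\infty)$ into $H$ with $\Psi^1=\Psi$ (a routine consequence of continuity of dilations on $L^2$, using only $\psi_j\in H^1$). Thus, given $\varepsilon>0$, one may fix $\lambda>1$ close enough to $1$ that $\|\Psi^\lambda-\Psi\|_H<\varepsilon$. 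It then remains to check that $\Psi^\lambda\in A_{1,-\frac2N}^-$ and $x\Psi^\lambda\in L^2$, after which Theorem~\ref{t2}(2), applied with $t_0=0$ and $(\alpha,\beta)=(1,-\tfrac2N)$, forces the maximal solution of \eqref{S} issued from $\mathbf u_0:=\Psi^\lambda$ to be non global, which is the assertion.

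For the two sign conditions I would simply compute. Put $A:=\sum_{j}\|\nabla\psi_j\|^2$, $B:=\sum_{j}\|\psi_j\|^2$ and $C:=\sum_{j,k}a_{jk}\int_{\R^N}|\psi_j\psi_k|^p\,dx$; note $C>0$, since $C=0$ would force $A=0$ and hence $\Psi=0$. The scaling identities $\|\nabla\psi_j^\lambda\|^2=\lambda^2\|\nabla\psi_j\|^2$, $\|\psi_j^\lambda\|^2=\|\psi_j\|^2$ and $\int|\psi_j^\lambda\psi_k^\lambda|^p\,dx=\lambda^{N(p-1)}\int|\psi_j\psi_k|^p\,dx$ give $S(\Psi^\lambda)=\tfrac12\lambda^2A+\tfrac12B-\tfrac1{2p}\lambda^{N(p-1)}C$ and $2K_{1,-\frac2N}(\Psi^\lambda)=\tfrac4N\lambda^2A-\tfrac{2(p-1)}{p}\lambda^{N(p-1)}C$. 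Since $\Psi$ is a ground state it satisfies the constraint $K_{1,-\frac2N}(\Psi)=0$, i.e. $A=\tfrac{N(p-1)}{2p}C$ (the Nehari–Pohozaev relation for \eqref{E}); substituting this and using $p>p_*$, so that $N(p-1)>2$, we obtain, for every $\lambda>1$, $2K_{1,-\frac2N}(\Psi^\lambda)=\tfrac{2(p-1)}{p}C\,\lambda^2\bigl(1-\lambda^{N(p-1)-2}\bigr)<0$ and $\tfrac{d}{d\lambda}S(\Psi^\lambda)=A\lambda\bigl(1-\lambda^{N(p-1)-2}\bigr)<0$. Hence $K_{1,-\frac2N}(\Psi^\lambda)<0$ and $S(\Psi^\lambda)<S(\Psi)=m$, that is $\Psi^\lambda\in A_{1,-\frac2N}^-$ for all $\lambda>1$.

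It remains to secure the finite-variance hypothesis $x\Psi^\lambda\in L^2$. Here I would invoke decay of the ground state: the components of $\Psi$ solve the elliptic system \eqref{E}, so by elliptic regularity they are smooth, and a standard comparison-principle argument for $-\Delta\psi_j+\psi_j=f_j$ with $f_j$ vanishing at infinity yields exponential decay of $\psi_j$ and $\nabla\psi_j$; in particular $|x|\,\Psi\in L^2$, and then $\||x|\,\Psi^\lambda\|=\lambda^{-1}\||x|\,\Psi\|<\infty$ as well, so the hypotheses of Theorem~\ref{t2}(2) are met and the argument closes. The main obstacle is precisely this last ingredient — verifying that the minimiser furnished by Theorem~\ref{t1} is a genuinely (exponentially) decaying solution of \eqref{E}; should the required decay be awkward to obtain, the fallback is to replace the rigid virial identity behind Theorem~\ref{t2}(2) by a truncated (localised) virial estimate, at the price of a heavier computation. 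Everything else reduces to the elementary one-variable analysis of $\lambda\mapsto S(\Psi^\lambda)$ carried out above, together with the continuity of $\lambda\mapsto\Psi^\lambda$ in $H$.
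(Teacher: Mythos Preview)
Your proposal is correct and follows essentially the same route as the paper: take the $L^2$-invariant dilation $\Psi^\lambda=\lambda^{N/2}\Psi(\lambda\,\cdot\,)$ with $\lambda>1$, verify $S(\Psi^\lambda)<m=S(\Psi)$ and $K_{1,-2/N}(\Psi^\lambda)<0$ (the paper packages this as a lemma via Lemma~\ref{cle}, while you do the one-variable computation directly), then apply Theorem~\ref{t2}(2) and let $\lambda\to1$. You are in fact more careful than the paper in explicitly addressing the finite-variance hypothesis $x\Psi^\lambda\in L^2$, which the paper invokes without comment.
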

In what follows, we collect some intermediate estimates.
\subsection{Tools}
Any solution to \eqref{S} formally enjoys the so-called Virial identity \cite{ntds}.
\begin{prop}\label{vir}
Let ${\bf u}\in H$, a solution to \eqref{S} such that $x{\bf u}\in L^2$. Then,
\begin{equation}\label{vrl}
\frac{1}8\Big(\displaystyle\Sigma_{j=1}^m\|xu_j(t)\|_{L^2}^2\Big)''=\displaystyle\Sigma_{j=1}^m\|\nabla u_j\|_{L^2}^2-\frac{N(p-1)}{2p}\displaystyle\Sigma_{j,k=1}^m\int_{\R^N}|u_ju_k|^{p}\,dx.
\end{equation}
\end{prop}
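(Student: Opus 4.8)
The plan is to set $V(t):=\sum_{j=1}^m\|xu_j(t)\|^2=\sum_{j=1}^m\int_{\R^N}|x|^2|u_j|^2\,dx$ and to differentiate $V$ twice directly from \eqref{S}, written as $\dot u_j=i(\Delta u_j+F_j)$ with $F_j:=\sum_{k=1}^m a_{jk}|u_k|^p|u_j|^{p-2}u_j$. The first step is the local mass conservation law for each component. Since the coefficients $a_{jk}$ are real, the product $\bar u_jF_j=\sum_{k=1}^m a_{jk}|u_j|^p|u_k|^p$ is real, so $\mathrm{Im}(\bar u_jF_j)=0$ and therefore
\[
\partial_t|u_j|^2=2\,\mathrm{Re}(\bar u_j\dot u_j)=-2\,\nabla\!\cdot\mathrm{Im}(\bar u_j\nabla u_j).
\]
Multiplying by $|x|^2$, summing over $j$ and integrating by parts (with $\nabla|x|^2=2x$) yields, after the nonlinearity has dropped out at this order,
\[
V'(t)=4\,\mathrm{Im}\sum_{j=1}^m\int_{\R^N}\bar u_j\,(x\cdot\nabla u_j)\,dx.
\]

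Next I would differentiate once more and substitute $\dot u_j=i(\Delta u_j+F_j)$, splitting the result into a kinetic part and a nonlinear part. The kinetic part is the classical dilation (Pohozaev) computation: integrating by parts,
\[
4\,\mathrm{Re}\sum_{j=1}^m\int_{\R^N}\big[\bar u_j\,(x\cdot\nabla\Delta u_j)-\Delta\bar u_j\,(x\cdot\nabla u_j)\big]\,dx=8\sum_{j=1}^m\|\nabla u_j\|^2.
\]
For the nonlinear part I would use the pointwise identity $\mathrm{Re}\big(|u_j|^{p-2}\bar u_j\nabla u_j\big)=\tfrac1p\nabla|u_j|^p$ to reduce the contribution of $F_j$ to $\frac1p\sum_{j,k}a_{jk}\int_{\R^N}|u_k|^p\,(x\cdot\nabla|u_j|^p)\,dx$. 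Symmetrizing in $(j,k)$ with $a_{jk}=a_{kj}$ converts this into $\frac1{2p}\sum_{j,k}a_{jk}\int_{\R^N}x\cdot\nabla(|u_ju_k|^p)\,dx$, and a last integration by parts ($\int x\cdot\nabla F\,dx=-N\int F\,dx$), combined with the zeroth-order term $-N\sum_{j,k}a_{jk}\int_{\R^N}|u_ju_k|^p\,dx$, produces the coefficient $\frac Np-N=-\frac{N(p-1)}p$. Dividing $V''$ by $8$ then reproduces the nonlinear term on the right of \eqref{vrl} (the coupling constants $a_{jk}$ entering exactly as in the energy $E$).

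The main obstacle is not the algebra but its justification: Proposition \ref{vir} is stated for a \emph{formal} solution, and the manipulations above presuppose enough decay and regularity of ${\bf u}(t)$ to differentiate under the integral sign and to discard all boundary terms. To make this rigorous I would first establish the identity for smooth, rapidly decaying data and then approximate; alternatively, I would replace the weight $|x|^2$ by a smooth radial cutoff $\phi_R$ with controlled derivatives, derive the truncated identity, and let $R\to\infty$, bounding the commutator errors by the uniform $H^1$ bound coming from conservation of mass and energy together with the hypothesis $x{\bf u}(t)\in L^2$. The delicate points are showing that $t\mapsto\|xu_j(t)\|^2$ is genuinely of class $C^2$ and that the error terms generated by the truncation vanish in the limit.
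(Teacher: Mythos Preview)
The paper does not actually supply a proof of this proposition: it is stated in the ``Tools'' subsection with the remark that any solution to \eqref{S} \emph{formally} enjoys the Virial identity, and the reader is referred to \cite{ntds}. So there is no argument in the paper to compare yours against.

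That said, your derivation is the standard one and is correct at the formal level. Two minor comments. First, you rightly keep track of the coupling constants $a_{jk}$ in the nonlinear term; the statement of Proposition~\ref{vir} in the paper appears to have dropped them, but the identity used later (e.g.\ the relation $\tfrac18 Q''=\text{const}\cdot K_{1,-2/N}$ in the blow-up argument) only makes sense with the $a_{jk}$ present. Second, your remarks about the need for a rigorous justification (truncated weights $\phi_R$ and a limiting argument, or regularization of the data) go beyond what the paper claims---it explicitly labels the identity as formal---so you are being more careful than the text itself.
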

The following Gagliardo-Nirenberg inequality \cite{gn} will be useful.
\begin{prop}\label{intrp}
For any $(u_1,..,u_m)\in H$, yields
\begin{equation}\label{Nirenberg}
\displaystyle \sum_{j,k=1}^{m}\displaystyle \int_{\R^N} |u_ju_k|^p\,dx \leq C \left(\displaystyle\sum_{j=1}^{m}\|\nabla u_j\|^2\right)^{\frac{(p-1)N}2}\left(\displaystyle\sum_{j=1}^{m}\|u_j\|^2\right)^{\frac{N-p(N -2)}2}.\end{equation}
\end{prop}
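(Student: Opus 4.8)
The plan is to reduce the vector inequality to the classical scalar Gagliardo--Nirenberg inequality applied to the modulus $v:=\big(\sum_{j=1}^m|u_j|^2\big)^{1/2}$. The starting observation is that the left-hand side factorizes pointwise, since $\sum_{j,k}|u_ju_k|^p=\sum_{j,k}|u_j|^p|u_k|^p=\big(\sum_j|u_j|^p\big)^2$, so that
$$\displaystyle\sum_{j,k=1}^m\int_{\R^N}|u_ju_k|^p\,dx=\Big\|\sum_{j=1}^m|u_j|^p\Big\|^2.$$
First I would compare the two elementary norms on $\C^m$: for $p\geq2$ the embedding $\ell^p\hookrightarrow\ell^2$ gives $\sum_j|u_j|^p\leq v^p$, while for $1<p<2$ Hölder's inequality with exponents $\frac2p,\frac2{2-p}$ yields $\sum_j|u_j|^p\leq m^{1-\frac p2}v^p$. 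In either case $\sum_j|u_j|^p\leq C\,v^p$ pointwise, with $C$ depending only on $m$ and $p$, whence the left-hand side is controlled by $C\|v^p\|^2=C\|v\|_{2p}^{2p}$.

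Next I would invoke the scalar Gagliardo--Nirenberg inequality for the single nonnegative function $v$. Writing $\theta:=\frac{(p-1)N}2$ and $\sigma:=\frac{N-p(N-2)}2$, a scaling check together with the homogeneity identity $\theta+\sigma=p$ confirms these are the admissible exponents, and the hypothesis $p_*<p<p^*$ guarantees $\theta,\sigma>0$ as well as $2<2p<\frac{2N}{N-2}$, so that
$$\|v\|_{2p}^{2p}\leq C\|\nabla v\|^{2\theta}\|v\|^{2\sigma}.$$
It then remains to convert the scalar quantities back into the vector data. The mass term is exact: $\|v\|^2=\int_{\R^N}v^2\,dx=\sum_{j=1}^m\|u_j\|^2$.

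The gradient term carries the only genuine work. The key is the diamagnetic-type pointwise bound $|\nabla v|\leq\big(\sum_j|\nabla u_j|^2\big)^{1/2}$, which I would derive from the identity $v\,\nabla v=\sum_j\mathrm{Re}(\bar u_j\nabla u_j)$ followed by the Cauchy--Schwarz inequality on $\C^m$; integrating gives $\|\nabla v\|^2\leq\sum_j\|\nabla u_j\|^2$, and in particular $v\in H^1$, so that the scalar inequality above is legitimately applicable. The main obstacle is the lack of differentiability of the square root on the set $\{v=0\}$: I would handle it in the standard way by working with the regularized function $v_\varepsilon:=\big(\sum_j|u_j|^2+\varepsilon^2\big)^{1/2}$, establishing the gradient bound for $v_\varepsilon$ (which is smooth wherever the $u_j$ are regular) and then letting $\varepsilon\to0$. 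Substituting the mass and gradient estimates into the scalar Gagliardo--Nirenberg inequality and absorbing all multiplicative constants into $C$ produces exactly \eqref{Nirenberg}.
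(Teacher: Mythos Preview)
Your argument is correct. The paper does not actually prove this proposition; it merely states it with a citation to Nirenberg \cite{gn}, so there is no ``paper's own proof'' to compare against. Your reduction to the scalar inequality via the auxiliary function $v=(\sum_j|u_j|^2)^{1/2}$, together with the diamagnetic bound $|\nabla v|\le(\sum_j|\nabla u_j|^2)^{1/2}$, is a clean and standard route.

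One small wording issue: when you write ``for $p\ge 2$ the embedding $\ell^p\hookrightarrow\ell^2$ gives $\sum_j|u_j|^p\le v^p$'', the embedding direction is stated backwards. What you are using is the elementary monotonicity $\|\cdot\|_{\ell^p}\le\|\cdot\|_{\ell^2}$ for $p\ge 2$ on finite tuples, i.e.\ the inclusion $\ell^2\hookrightarrow\ell^p$. The inequality you actually write is the correct one, so this is purely a matter of notation. Also note that the hypothesis $p>p_*$ is not needed for this inequality itself (only $1<p<p^*$ is required for the Gagliardo--Nirenberg step); the lower bound $p_*$ enters elsewhere in the paper.
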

Let us list some Sobolev embeddings \cite{Adams,Lions}.
\begin{prop}\label{injection}
The first injection is continuous and the last one is compact.
\begin{enumerate}
\item $ W^{s,p}(\R^N)\hookrightarrow L^q(\R^N)$ whenever
$1<p<q<\infty, \quad s>0\quad \mbox{and}\quad \frac{1}{p}\leq \frac{1}{q} + \frac {s}{N};$
\item
for $2<p< 2 p^*,$ 
\begin{equation}\label{radial} H_{rd}^1(\R^N)\hookrightarrow\hookrightarrow L^p(\R^N).\end{equation}
\end{enumerate}
\end{prop}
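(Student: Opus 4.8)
The statement gathers two classical embeddings, so the plan is to reduce each to a standard building block and then assemble; both are in \cite{Adams,Lions}, but here is the route I would follow.

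For (1), I would first treat the endpoint. Fix $q_0$ by $\frac1{q_0}=\frac1p-\frac sN$, so the borderline case of the hypothesis is $q=q_0$, and assume $0<sp<N$; the case $sp\geq N$ is easier, since then $W^{s,p}$ embeds into every $L^q$ with $q>p$ and the asserted range follows a fortiori. Writing $u=G_s*f$ with $f:=(1-\Delta)^{s/2}u\in L^p$ and $G_s$ the Bessel kernel, the bound $G_s(x)\lesssim|x|^{s-N}$ near the origin together with the exponential decay of $G_s$ at infinity dominates the convolution by the Riesz potential $I_s$, so the Hardy--Littlewood--Sobolev inequality gives $\|u\|_{q_0}\lesssim\|I_sf\|_{q_0}\lesssim\|f\|_p=\|u\|_{W^{s,p}}$. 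Since $u\in W^{s,p}\hookrightarrow L^p$ trivially, H\"older interpolation
$$\|u\|_q\leq\|u\|_p^{1-\theta}\|u\|_{q_0}^{\theta},\qquad \frac1q=\frac{1-\theta}p+\frac{\theta}{q_0},\quad\theta\in[0,1],$$
sweeps out exactly the indices $p\leq q\leq q_0$, i.e.\ the condition $\frac1p\leq\frac1q+\frac sN$ with $q>p$, which proves continuity.

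For (2), the obstruction to compactness of $H^1(\R^N)\hookrightarrow L^p(\R^N)$ on the whole space is twofold: translation invariance lets mass escape to infinity, and at the endpoint $p=2p^*=\frac{2N}{N-2}$ scaling allows concentration. Restricting to radial functions removes the first defect, and the tool that makes this quantitative is the Strauss radial lemma: for $N\geq2$ and radial $u\in H^1(\R^N)$ one has $|u(x)|\lesssim|x|^{-\frac{N-1}2}\|u\|_{H^1}$ for $|x|\geq1$. Given a bounded sequence $(u_n)\subset H_{rd}^1$, I would extract by the classical Rellich--Kondrachov theorem and a diagonal argument a subsequence converging strongly in $L^p(B_R)$ for every $R$ and every $2\leq p<2p^*$; then control the tail uniformly via
$$\int_{|x|>R}|u_n|^p\,dx\leq\Big(\sup_{|x|>R}|u_n|\Big)^{p-2}\int_{|x|>R}|u_n|^2\,dx\lesssim R^{-\frac{(p-2)(N-1)}2}\,\|u_n\|_{H^1}^{p},$$
which tends to $0$ uniformly in $n$ as $R\to\infty$ precisely when $p>2$; and finally combine the local strong convergence with this uniform tail smallness to obtain strong convergence in $L^p(\R^N)$.

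The only delicate point is this uniform tail control, the interior step being routine Rellich compactness. The strict exponent range is forced by the two failure mechanisms: at $p=2$ the factor $p-2$ vanishes and the Strauss estimate yields no decay, matching the fact that no mass loss is imposed in $L^2$, while at $p=2p^*$ the radial decay no longer beats the scaling and concentrating bubbles survive; hence compactness holds only on the open interval $2<p<2p^*$. For $N=2$ one has $p^*=\infty$, so the same argument yields the compact embedding for all $2<p<\infty$.
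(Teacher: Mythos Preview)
Your sketch is correct and follows the standard routes: Bessel potential representation plus Hardy--Littlewood--Sobolev and interpolation for (1), and the Strauss radial decay lemma combined with local Rellich--Kondrachov compactness for (2). There is nothing to compare against, however, because the paper does not prove this proposition at all; it simply records it as a classical fact with citations to \cite{Adams,Lions} and moves on. So your proposal supplies an argument where the paper provides none.
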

Finally, recall the so-called generalized Pohozaev identity \cite{sl1}.
\begin{prop}
$\Psi \in H$ is a solution to \eqref{E} if and only if $S'(\Psi)=0.$ Moreover, in such a case 
$$K_{\alpha,\beta}(\Psi)=0,\quad\mbox{for any}\quad (\alpha,\beta)\in\R^2.$$
\end{prop}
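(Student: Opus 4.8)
The statement has two parts: the variational characterization $S'(\Psi)=0\iff\Psi$ solves \eqref{E}, and the vanishing of every constraint $K_{\alpha,\beta}$ at a solution. The plan is to compute the first variation of the action $S$ and identify its Euler--Lagrange system with \eqref{E}, and then to recognize each functional $K_{\alpha,\beta}$ as the derivative of $S$ along a two-parameter scaling, so that $K_{\alpha,\beta}(\Psi)=\langle S'(\Psi),\cdot\,\rangle=0$; equivalently, $2K_{\alpha,\beta}(\Psi)$ is a fixed linear combination, with coefficients $2\alpha$ and $\beta$, of the Nehari and Pohozaev balances attached to \eqref{E}.

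First I would check that $S\in C^1(H,\R)$. The quadratic part is smooth, and the coupling term is controlled because $p_*<p<p^*$ forces $2p<2p^*$, so the embedding $H^1\hookrightarrow L^{2p}$ (Proposition \ref{injection}) together with \eqref{Nirenberg} makes $\int_{\R^N}|u_ju_k|^p\,dx$ finite and continuously differentiable on $H$. Differentiating, and using $a_{jk}=a_{kj}$ to symmetrize the two appearances of the $j$-th component in the double sum, one obtains for every $\varphi=(\varphi_1,\ldots,\varphi_m)\in H$
$$\langle S'({\bf u}),\varphi\rangle=\mathrm{Re}\sum_{j=1}^m\int_{\R^N}\Big(-\Delta u_j+u_j-\sum_{k=1}^m a_{jk}|u_k|^p|u_j|^{p-2}u_j\Big)\overline{\varphi_j}\,dx.$$
By the fundamental lemma of the calculus of variations (density of $[C_c^\infty]^m$ in $H$, testing against both $\varphi_j$ and $i\varphi_j$), $S'(\Psi)=0$ holds iff each bracket vanishes in the distributional sense, which is exactly \eqref{E}. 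This settles the equivalence.

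For the \emph{moreover} part I would use the scaling ${\bf u}^\lambda:=(u_1^\lambda,\ldots,u_m^\lambda)$ with $u_j^\lambda(x):=e^{\alpha\lambda}u_j(e^{-\beta\lambda}x)$. A change of variables gives $\|\nabla u_j^\lambda\|^2=e^{(2\alpha+(N-2)\beta)\lambda}\|\nabla u_j\|^2$, $\|u_j^\lambda\|^2=e^{(2\alpha+N\beta)\lambda}\|u_j\|^2$ and $\int_{\R^N}|u_j^\lambda u_k^\lambda|^p\,dx=e^{(2p\alpha+N\beta)\lambda}\int_{\R^N}|u_ju_k|^p\,dx$, so that differentiating $S({\bf u}^\lambda)$ at $\lambda=0$ reproduces precisely $K_{\alpha,\beta}({\bf u})$. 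Since the generator is $\partial_\lambda u_j^\lambda|_{\lambda=0}=\alpha u_j-\beta\,x\cdot\nabla u_j$ componentwise, the chain rule yields $K_{\alpha,\beta}({\bf u})=\langle S'({\bf u}),\alpha{\bf u}-\beta\,x\cdot\nabla{\bf u}\rangle$, whence $S'(\Psi)=0$ forces $K_{\alpha,\beta}(\Psi)=0$ for every $(\alpha,\beta)\in\R^2$. Avoiding the scaling field, the same conclusion follows by pairing \eqref{E} with $\overline{\psi_j}$, which gives the Nehari balance $\sum_j(\|\nabla\psi_j\|^2+\|\psi_j\|^2)=\sum_{j,k}a_{jk}\int_{\R^N}|\psi_j\psi_k|^p\,dx$, and by pairing \eqref{E} with $x\cdot\nabla\overline{\psi_j}$ (via $\mathrm{Re}\int\Delta\psi_j\,x\cdot\nabla\overline{\psi_j}=\tfrac{N-2}2\|\nabla\psi_j\|^2$, $\mathrm{Re}\int\psi_j\,x\cdot\nabla\overline{\psi_j}=-\tfrac N2\|\psi_j\|^2$, and $\int x\cdot\nabla(|\psi_j\psi_k|^p)=-N\int|\psi_j\psi_k|^p$), which gives the Pohozaev balance $\tfrac{N-2}2\sum_j\|\nabla\psi_j\|^2+\tfrac N2\sum_j\|\psi_j\|^2=\tfrac{N}{2p}\sum_{j,k}a_{jk}\int_{\R^N}|\psi_j\psi_k|^p\,dx$; substituting both into the definition collapses $2K_{\alpha,\beta}(\Psi)$ to zero for all $(\alpha,\beta)$.

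The only genuinely delicate point is the rigorous justification of the Pohozaev pairing: the multiplier $x\cdot\nabla\overline{\psi_j}$ need not lie in $H^1$, and the integrations by parts above are not licensed for a mere weak solution. I would remove this obstacle by elliptic regularity, bootstrapping \eqref{E} to show that every solution is smooth with enough decay for $\psi_j$ and $x\cdot\nabla\psi_j$ to lie in the relevant weighted spaces, followed by a cut-off and approximation argument to legitimize each integration by parts and to discard the boundary terms in the limit. This is exactly the generalized Pohozaev identity recorded in \cite{sl1}, on which I would rely.
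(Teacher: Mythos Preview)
Your argument is correct and complete: the Euler--Lagrange computation is right (the symmetry $a_{jk}=a_{kj}$ is exactly what produces the factor $\sum_k a_{jk}|u_k|^p|u_j|^{p-2}u_j$), the scaling computation recovering $K_{\alpha,\beta}$ is correct, and you rightly flag the Pohozaev pairing as the only place where regularity and decay must be invoked. The paper itself does not prove this proposition at all; it simply quotes it as the ``generalized Pohozaev identity'' from \cite{sl1}, so your write-up supplies what the paper omits and follows the same standard route one would find in that reference.
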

\section{The stationary problem }
The goal of this section is to prove that the elliptic problem \eqref{E} has a ground state solution which is a vector one in some cases. Let us start with some notations. For ${\bf u} :=(u_1,...,u_m)\in H$ and $\lambda,\, \alpha, \,\beta\in \R,$ we introduce the scaling
$$( u_j^\lambda)^{\alpha,\beta}:= e^{\alpha\lambda}u_j(e^{-\beta \lambda}.)$$
and the differential operator
$$ \pounds_{\alpha,\beta}:H^1\to H^1,\quad u_j\mapsto \partial_\lambda((u_j^\lambda)^{\alpha,\beta})_{|\lambda=0}.$$
We extend the previous operator as follows, if $A:H^1(\R^N)\to \R,$ then 
$$\pounds_{\alpha,\beta}A(u_j):= \partial_\lambda (A((u_j^\lambda)^{\alpha,\beta}))_{|\lambda=0}.$$
Denote also the constraint
\begin{eqnarray*}
K_{\alpha,\beta}({\bf u})&:= &\partial_\lambda\big(S(({\bf u}^\lambda)^{\alpha,\beta})\big )_{|\lambda = 0}\\
&=& \frac{1}{2}\displaystyle\sum_{j=1}^m\Big((2\alpha + (N -2)\beta) \|\nabla u_j\|^2 + (2\alpha + N \beta) \| u_j\|^2   \Big) \\&- &\frac{1}{2p}\displaystyle\sum_{j,k=1}^m a_{jk}\displaystyle\int_{\R^N}(2p\alpha + N \beta)|u_j u_k|^{p}\,dx\\
&:=&\frac{1}{2}\displaystyle\sum_{j=1}^m K_{\alpha,\beta}^Q(u_j) - \frac{1}{2p}\displaystyle\sum_{j,k=1}^m a_{jk}\displaystyle\int_{\R^N}(2p\alpha + N \beta)|u_j u_k|^{p}\,dx.
\end{eqnarray*}
Finally, we introduce the quantity
\begin{eqnarray*}
H_{\alpha,\beta}({\bf u})
&:=& S({\bf u}) - \frac{1}{2\alpha + N\beta}K_{\alpha,\beta}({\bf u})\\
&=& \frac{1}{2\alpha +N\beta }\Big[\displaystyle\sum_{j=1}^m 2\beta \|\nabla u_j\|^2  + \alpha (1-\frac{1}{p})\displaystyle\sum_{j,k=1}^m a_{jk}\displaystyle\int_{\R^N}|u_j u_k|^{p}\,dx\Big].
\end{eqnarray*}
\subsection{Existence of ground state}
Now, we prove Theorem \ref{t1} about existence of a ground state solution to the stationary problem \eqref{E}.
\begin{rem} 
\begin{enumerate}
\item[(i)] The proof of the Theorem \ref{t1} is based on several lemmas;
\item[(ii)]we write, for easy notation, $u_j^\lambda:= (u_j^\lambda)^{\alpha,\beta},\, K:= K_{\alpha,\beta},\, K^Q:= K_{\alpha,\beta}^Q,\, \pounds:= \pounds_{\alpha, \beta}\, \mbox{and}\, H:= H_{\alpha,\beta}.$
\end{enumerate}
\end{rem}
\begin{lem} Let $(\alpha,\beta)\in \R_+^2.$ Then
\begin{enumerate}
\item $\min \big(\pounds H({\bf u}), H({\bf u})\big)\geq 0$ for all $0 \neq {\bf u} \in H;$
\item $\lambda \mapsto H({\bf u}^\lambda)$ is increasing.
\end{enumerate}
\end{lem}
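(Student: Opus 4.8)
The plan is to reduce both claims to explicit scaling identities for $H:=H_{\alpha,\beta}$ along the flow $u_j\mapsto u_j^\lambda=e^{\alpha\lambda}u_j(e^{-\beta\lambda}\cdot)$, after which everything becomes a sign check under the standing hypotheses $\alpha,\beta\ge0$, $N\ge2$, $p>p_*>1$. First I would record that a change of variables gives $\|\nabla u_j^\lambda\|^2=e^{(2\alpha+(N-2)\beta)\lambda}\|\nabla u_j\|^2$ and $\int_{\R^N}|u_j^\lambda u_k^\lambda|^p\,dx=e^{(2p\alpha+N\beta)\lambda}\int_{\R^N}|u_ju_k|^p\,dx$, so inserting these into the formula for $H$ yields
$$H({\bf u}^\lambda)=\frac1{2\alpha+N\beta}\Big[2\beta\,e^{(2\alpha+(N-2)\beta)\lambda}\sum_{j=1}^m\|\nabla u_j\|^2+\alpha\Big(1-\frac1p\Big)e^{(2p\alpha+N\beta)\lambda}\sum_{j,k=1}^m a_{jk}\int_{\R^N}|u_ju_k|^p\,dx\Big].$$

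For item (1), here $2\alpha+N\beta>0$, $\beta\ge0$, $\alpha(1-\frac1p)\ge0$ (because $p>1$), $a_{jk}>0$, and the Dirichlet energies and the integrals are nonnegative; since the formula for $H({\bf u})$ is then a sum of nonnegative terms, $H({\bf u})\ge0$. Differentiating the displayed expression at $\lambda=0$ gives
$$\pounds H({\bf u})=\frac1{2\alpha+N\beta}\Big[2\beta(2\alpha+(N-2)\beta)\sum_{j=1}^m\|\nabla u_j\|^2+\alpha\Big(1-\frac1p\Big)(2p\alpha+N\beta)\sum_{j,k=1}^m a_{jk}\int_{\R^N}|u_ju_k|^p\,dx\Big],$$
and the new factors are nonnegative too: $2\alpha+(N-2)\beta\ge0$ since $N\ge2$ and $\alpha,\beta\ge0$, and $2p\alpha+N\beta\ge0$. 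Hence $\pounds H({\bf u})\ge0$, and therefore $\min(\pounds H({\bf u}),H({\bf u}))\ge0$.

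For item (2), I would use that the dilation $(\cdot)^{\alpha,\beta}$ is a one-parameter group, $({\bf u}^\lambda)^\mu={\bf u}^{\lambda+\mu}$, which is immediate from its definition. Then $\frac{d}{d\lambda}H({\bf u}^\lambda)=\partial_\mu H(({\bf u}^\lambda)^\mu)\big|_{\mu=0}=\pounds H({\bf u}^\lambda)\ge0$ by item (1) applied to the nonzero element ${\bf u}^\lambda\in H$; hence $\lambda\mapsto H({\bf u}^\lambda)$ is nondecreasing (in fact strictly increasing unless $\beta=0$ and $N=2$, since $\sum_{j,k}a_{jk}\int_{\R^N}|u_ju_k|^p\,dx>0$ whenever ${\bf u}\ne0$).

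There is no real obstacle in this lemma: it is bookkeeping with scaling exponents together with a sign check. The only points needing a little care are computing the exponents $2\alpha+(N-2)\beta$ and $2p\alpha+N\beta$ correctly from the change of variables, and noticing that the nonnegativity used is precisely $\alpha\ge0$, $\beta\ge0$, $N\ge2$ and $p>1$ — all part of the running assumptions — after which both parts follow at once, (2) being a formal consequence of (1) via the group property of the scaling.
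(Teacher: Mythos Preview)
Your proof is correct and is, in fact, more transparent than the paper's. You compute the scaling of each building block directly, obtain the closed formula
\[
\pounds H({\bf u})=\frac{1}{2\alpha+N\beta}\Big[2\beta(2\alpha+(N-2)\beta)\sum_j\|\nabla u_j\|^2+\alpha\Big(1-\frac1p\Big)(2p\alpha+N\beta)\sum_{j,k}a_{jk}\int|u_ju_k|^p\Big],
\]
and read off nonnegativity term by term. The paper instead writes $H=(1-\frac{\pounds}{2\alpha+N\beta})S$ and factors $\pounds H$ through the operator identity $(\pounds-(2\alpha+(N-2)\beta))(\pounds-(2\alpha+N\beta))$, which annihilates the quadratic part of $S$; it then gets only a lower bound on $\pounds H$ (having discarded the term $(2\alpha+(N-2)\beta)H({\bf u})\ge0$). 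Your direct computation gives the exact expression and avoids that algebra; the paper's operator calculus, on the other hand, is reusable later when one needs $\pounds^2S$ in the Lagrange-multiplier step. For part (2), both arguments are the same: $\partial_\lambda H({\bf u}^\lambda)=\pounds H({\bf u}^\lambda)$ via the group property.

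One small slip in your aside: the exceptional case for \emph{strict} monotonicity is $\alpha=0$ and $N=2$, not $\beta=0$ and $N=2$. When $\beta=0$ and $\alpha>0$, the nonlinear term in $\pounds H$ still carries the positive coefficient $\alpha(1-\frac1p)(2p\alpha)>0$, while for $\alpha=0$, $N=2$ both coefficients vanish. This does not affect the lemma as stated, which only requires nondecreasing.
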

\begin{proof}
We have
$$H({\bf u})\geq \frac{2\beta}{2\alpha + N\beta}\|\nabla u_j\|^2\geq 0.$$
 Moreover, with a direct computation
{\small\begin{eqnarray*}
\pounds H({\bf u}) 
&= &\pounds \big(1 - \frac{\pounds}{2\alpha + N\beta}\big)S({\bf u})\\
&=& \frac{-1}{2\alpha + N \beta}\big(\pounds - (2\alpha + (N -2)\beta)\big)\big(\pounds - (2\alpha + N \beta)\big)S({\bf u}) +( 2\alpha + (N -2)\beta)\big(1 - \frac{\pounds}{2\alpha + N\beta}\big)S({\bf u})\\
&=& \frac{-1}{2\alpha + N \beta}\big(\pounds - (2\alpha + (N -2)\beta)\big)\big(\pounds - (2\alpha + N \beta)\big)S({\bf u}) +( 2\alpha + (N - 2)\beta)H({\bf u}).
\end{eqnarray*}}
Since $\big (\pounds - (2\alpha + (N -2)\beta)\big) \|\nabla u_j\|^2 = \big(\pounds - (2\alpha + N\beta)\big)\|u_j\|^2 = 0,$ we have
$\big (\pounds - (2\alpha + (N - 2)\beta)\big)  \big(\pounds - (2\alpha + N\beta)\big)\|u_j\|_{H^1}^2 =0$ and
\begin{eqnarray*}
\pounds H({\bf u})&\geq&\frac{-1}{2\alpha + N \beta}\big(\pounds - (2\alpha + (N -2)\beta)\big)\big(\pounds - (2\alpha + N \beta)\big)\Big(\frac{-1}{2p}\displaystyle\sum_{j,k=1}^m a_{jk}\displaystyle\int_{\R^N}|u_ju_k|^p\,dx\Big)\\
&\geq& \frac{1}{2p}\frac{2\alpha (p - 1)}{2\alpha + N\beta}\big( 2\alpha(p - 1) +2\beta \big)\displaystyle\sum_{j,k=1}^m a_{jk}\displaystyle\int_{\R^N}|u_ju_k|^p\,dx\geq 0.
\end{eqnarray*}
The last point is a consequence of the equality $ \partial_\lambda H({\bf u}^\lambda) = \pounds H({\bf u}^\lambda).$ 
\end{proof}
The next intermediate result is the following.
\begin{lem} \label{K>0} Let $(\alpha,\beta)\in \R^2$ satisfying $2\alpha+(N-2)\beta>0$, $2\alpha+N\beta\geq0$ and $0\, \neq\, (u_1^n,...,u_m^n)$ be a bounded sequence of $H$ such that
$$ \lim_n\big(\displaystyle\sum_{j=1}^m K^Q(u_j^n)\big) =0.$$
Then there exists $n_0\in \N$ such that $K(u_1^n,...,u_m^n)>0$ for all $n\geq n_0.$
\end{lem}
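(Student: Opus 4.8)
The plan is to exploit that the hypothesis forces the gradients of all components to tend to $0$, so that the supercritical nonlinear part of $K$ becomes negligible against its quadratic part.

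First I would record that, under $2\alpha+(N-2)\beta>0$ and $2\alpha+N\beta\geq0$, every summand is nonnegative,
$$K^Q(u_j^n)=(2\alpha+(N-2)\beta)\|\nabla u_j^n\|^2+(2\alpha+N\beta)\|u_j^n\|^2\geq(2\alpha+(N-2)\beta)\|\nabla u_j^n\|^2\geq0,$$
so, writing $\varepsilon_n:=\sum_{j=1}^m\|\nabla u_j^n\|^2$, the assumption $\sum_jK^Q(u_j^n)\to0$ yields $\varepsilon_n\to0$. Next I would note that $\varepsilon_n>0$ for every $n$: if $\nabla u_j^n=0$ for all $j$, then each $u_j^n$ is a constant lying in $L^2(\R^N)$, hence $u_j^n=0$, contradicting $(u_1^n,\dots,u_m^n)\neq0$.

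Then I would use the definition
$$2K({\bf u}^n)=\sum_{j=1}^mK^Q(u_j^n)-\frac{2p\alpha+N\beta}{p}\sum_{j,k=1}^ma_{jk}\int_{\R^N}|u_j^nu_k^n|^p\,dx.$$
If $2p\alpha+N\beta\leq0$, the last term is nonnegative (the $a_{jk}$ are positive), hence $2K({\bf u}^n)\geq(2\alpha+(N-2)\beta)\varepsilon_n>0$ for all $n$ and there is nothing more to prove. Otherwise $2p\alpha+N\beta>0$, and I would invoke the Gagliardo--Nirenberg inequality \eqref{Nirenberg}: since $(u_1^n,\dots,u_m^n)$ is bounded in $H$, the factor $\big(\sum_j\|u_j^n\|^2\big)^{\frac{N-p(N-2)}2}$ is bounded, so
$$\sum_{j,k=1}^ma_{jk}\int_{\R^N}|u_j^nu_k^n|^p\,dx\leq C\Big(\sum_{j=1}^m\|\nabla u_j^n\|^2\Big)^{\frac{(p-1)N}2}\Big(\sum_{j=1}^m\|u_j^n\|^2\Big)^{\frac{N-p(N-2)}2}\leq C\,\varepsilon_n^{\frac{(p-1)N}2}.$$
Combining the two displays gives
$$2K({\bf u}^n)\geq(2\alpha+(N-2)\beta)\,\varepsilon_n-C\,\varepsilon_n^{\frac{(p-1)N}2}=\varepsilon_n\Big[(2\alpha+(N-2)\beta)-C\,\varepsilon_n^{\frac{(p-1)N}2-1}\Big].$$

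Finally I would use that $p>p_*=1+\frac2N$ is exactly equivalent to $\frac{(p-1)N}2>1$, so the exponent $\frac{(p-1)N}2-1$ is strictly positive; since $\varepsilon_n\to0$ the bracket tends to $2\alpha+(N-2)\beta>0$, hence it is positive for all $n\geq n_0$ for some $n_0\in\N$. As $\varepsilon_n>0$, this forces $K(u_1^n,\dots,u_m^n)>0$ for $n\geq n_0$. The only points needing a little care are the case distinction on the sign of $2p\alpha+N\beta$ and the verification that the supercriticality $p>p_*$ is precisely what makes the nonlinear term of lower order than the quadratic one as $\varepsilon_n\to0$; everything else is a routine estimate.
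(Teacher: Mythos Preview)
Your proof is correct and follows essentially the same route as the paper: decompose $K$ into its quadratic part $\sum_jK^Q(u_j^n)$ and the nonlinear term, then use the Gagliardo--Nirenberg inequality \eqref{Nirenberg} together with $p>p_*$ to show the nonlinear term is $o\big(\sum_j\|\nabla u_j^n\|^2\big)=o\big(\sum_jK^Q(u_j^n)\big)$. Your write-up is in fact more careful than the paper's in two respects: you verify explicitly that $\varepsilon_n>0$ (so the strict inequality $K>0$ genuinely follows, whereas the paper only concludes $K\simeq\frac12\sum_jK^Q\geq0$), and you make the case distinction on the sign of $2p\alpha+N\beta$, which the paper omits.
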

\begin{proof}
We have,
$$K(u_1^n,...,u_m^n) =\frac{1}{2}\displaystyle\sum_{j=1}^m K^Q(u_j^n)  - \frac{(2p\alpha + N \beta)}{2p}\displaystyle\sum_{j,k=1 }^m a_{jk}\displaystyle\int_{\R^N}|u_j^n u_k^n|^{p}\,dx.$$
Using Proposition \eqref{intrp}, since $p_*<p<p^*$, $\min\{(2\alpha+(N-2)\beta),2\alpha+N\beta\}>0$ and
$$K^Q(u_j^n) = \Big((2\alpha + (N -2)\beta) \|\nabla u_j^n\|^2 + (2\alpha + N \beta) \| u_j^n\|^2   \Big) \rightarrow 0,$$
yields
$$   \displaystyle \sum_{j,k=1}^{m}a_{jk}\displaystyle \int_{\R^N} |u_j^nu_k^n|^p = o\left( \displaystyle\sum_{j=1}^m \|\nabla u_j^n\|^2\right) = o\left(\displaystyle\sum_{j=1}^m K^Q(u_j^n)  \right) .   $$
Thus
$$K(u_1^n,...,u_m^n) \simeq\frac{1}{2}\displaystyle\sum_{j=1}^m K^Q(u_j^n)\geq 0 . $$
\end{proof}
We read an auxiliary result.
\begin{lem}\label{Lemma} 
Let $(0,0)\neq(\alpha, \beta)\in\R_+^2$ satisfying $(N,\alpha)\neq (2,0)$. Then
$$m_{\alpha,\beta}  = \inf_{0\neq{\bf u}\in H}\big\{H({\bf u})\quad\mbox{s.\, t}\quad K({\bf u})\leq 0 \big\}.$$
\end{lem}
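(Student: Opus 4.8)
The plan is to prove the two inequalities between $m_{\alpha,\beta}$ and the relaxed infimum $\tilde m := \inf\{H({\bf u})\ :\ 0\neq{\bf u}\in H,\ K({\bf u})\leq 0\}$. One direction is immediate: every ${\bf u}$ with $K({\bf u})=0$ also satisfies $K({\bf u})\leq 0$, and on the set $\{K({\bf u})=0\}$ we have $H({\bf u})=S({\bf u})-\frac1{2\alpha+N\beta}K({\bf u})=S({\bf u})$ by definition of $H$. Hence $\tilde m\leq m_{\alpha,\beta}$. The content is the reverse inequality $m_{\alpha,\beta}\leq\tilde m$, for which I would take an arbitrary admissible ${\bf u}$ with $K({\bf u})\leq 0$ and produce from it a rescaled competitor ${\bf u}^{\lambda_0}=({\bf u}^{\lambda_0})^{\alpha,\beta}$ with $K({\bf u}^{\lambda_0})=0$ and $S({\bf u}^{\lambda_0})\leq H({\bf u})$.

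First I would record how $K$ transforms along the scaling $\lambda\mapsto{\bf u}^\lambda$. Since $K({\bf u})=\partial_\lambda\big(S(({\bf u}^\lambda)^{\alpha,\beta})\big)_{|\lambda=0}$ and the three pieces of $S$ scale by $e^{(2\alpha+(N-2)\beta)\lambda}$, $e^{(2\alpha+N\beta)\lambda}$, and $e^{(2p\alpha+N\beta)\lambda}$ respectively, the function $g(\lambda):=S({\bf u}^\lambda)$ is a combination of three exponentials with these rates, so $K({\bf u}^\lambda)=g'(\lambda)$ is again such a combination. Under the hypotheses $(\alpha,\beta)\in\R_+^2\setminus\{(0,0)\}$ with $(N,\alpha)\neq(2,0)$, one checks the three exponents satisfy $0<2\alpha+(N-2)\beta\leq 2\alpha+N\beta<2p\alpha+N\beta$ (the left inequality being strict precisely because $(N,\alpha)\neq(2,0)$ rules out $\beta>0,\alpha=0,N=2$; and if $\beta=0$ then $\alpha>0$ forces all three positive and distinct in the relevant way). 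Consequently $g$ is a sum of a positive-coefficient convex part (the kinetic and mass terms, with positive exponents) minus a positive multiple of $e^{(2p\alpha+N\beta)\lambda}$; elementary calculus shows $g$ is increasing then decreasing, so $g'=K(\cdot^\lambda)$ vanishes at exactly one point $\lambda_0$, is positive for $\lambda<\lambda_0$ and negative for $\lambda>\lambda_0$. Since $K({\bf u})=g'(0)\leq 0$ we get $\lambda_0\leq 0$.

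Next I would combine this with the monotonicity of $\lambda\mapsto H({\bf u}^\lambda)$ from the previous lemma (which applies here since $(\alpha,\beta)\in\R_+^2$): because $\lambda_0\leq 0$,
\[
S({\bf u}^{\lambda_0}) = H({\bf u}^{\lambda_0}) \leq H({\bf u}^0) = H({\bf u}),
\]
where the first equality uses $K({\bf u}^{\lambda_0})=0$. Since ${\bf u}^{\lambda_0}$ is a valid competitor for $m_{\alpha,\beta}$, this yields $m_{\alpha,\beta}\leq H({\bf u})$, and taking the infimum over admissible ${\bf u}$ gives $m_{\alpha,\beta}\leq\tilde m$. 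Together with the easy direction this proves the identity.

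The main obstacle, and the step deserving the most care, is the claim that $g'$ has a single sign change with $g'>0$ to its left — i.e. that the scaling flow really does carry every point of $\{K\leq 0\}$ back onto $\{K=0\}$ at a nonpositive time. This requires checking the ordering and strict positivity of the three scaling exponents under the precise hypotheses $(\alpha,\beta)\in\R_+^2\setminus\{(0,0)\}$, $(N,\alpha)\neq(2,0)$, and then a short convexity/sign argument for the resulting exponential sum (one may differentiate $e^{-(2\alpha+(N-2)\beta)\lambda}g'(\lambda)$, which is strictly decreasing, to pin down uniqueness of the zero). The boundary case $\beta=0$, $\alpha>0$ should be treated in parallel since then $2\alpha+(N-2)\beta=2\alpha+N\beta$ and $g$ has only two distinct exponents, but the same monotonicity conclusion holds.
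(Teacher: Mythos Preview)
Your argument is correct and follows essentially the same route as the paper: rescale along $(\alpha,\beta)$ to hit $\{K=0\}$ at some $\lambda_0\leq 0$, then invoke the monotonicity of $\lambda\mapsto H({\bf u}^\lambda)$ from the preceding lemma to get $m_{\alpha,\beta}\leq H({\bf u}^{\lambda_0})\leq H({\bf u})$. The only difference is in how $\lambda_0$ is produced: rather than analyzing the exponential structure of $g(\lambda)=S({\bf u}^\lambda)$, the paper simply notes that $\sum_j K^Q(u_j^\lambda)\to 0$ as $\lambda\to-\infty$ and invokes Lemma~\ref{K>0} to obtain $K({\bf u}^\lambda)>0$ for $\lambda\ll 0$, after which the intermediate value theorem suffices---uniqueness of $\lambda_0$ and the exponent ordering are never needed (in particular your claimed strict inequality $2\alpha+N\beta<2p\alpha+N\beta$ fails in the boundary case $\alpha=0$, $\beta>0$, which you did not list, whereas the paper's route via Lemma~\ref{K>0} covers all admissible $(\alpha,\beta)$ uniformly).
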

\begin{proof} 
Denoting by $a$ the right hand side of the previous equality, 
 it is sufficient to prove that $m_{\alpha,\beta}\leq a.$ Take ${\bf u} \in H$ such that $K({\bf u})<0.$ Because $\displaystyle\lim_{\lambda\rightarrow -\infty}K^Q({\bf u}^\lambda)=0,$ by the previous Lemma, there exists some $\lambda<0$ such that $ K({\bf u}^\lambda)>0.$ With a continuity argument there exists $\lambda_0\leq0$ such that $K ({\bf u}^{\lambda_0}) = 0,$ then since $\lambda\mapsto H({\bf u}^\lambda)$ is increasing, we get
$$ m_{\alpha, \beta}\leq H({\bf u}^{\lambda_0}) \leq H({\bf u}).$$
This closes the proof.
\end{proof}{}
Let prepare the proof of the last part of the Theorem \ref{t1}. Here and hereafter, for $\lambda>0$ and ${\bf u}:=(u_1,..,u_m)\in H$ we denote ${\bf u}_{\lambda}:=\lambda^{\frac{N}2}{\bf u}(\lambda.)$ and 
\begin{gather*}
Q_j({\bf u}):=\frac2N\|\nabla u_j\|^2-(1-\frac1p)\displaystyle\displaystyle\Sigma_{k=1}^ma_{j,k}\int_{\R^N}|u_ju_k|^p\,dx;\\
S_j({\bf u}):=\frac12\|u_j\|_{H^1}^2-\frac1{2p}\displaystyle\displaystyle\Sigma_{k=1}^ma_{j,k}\int_{\R^N}|u_ju_k|^p\,dx.
\end{gather*}
Note that $\displaystyle\displaystyle\Sigma_{j=1}^mQ_j=K_{1,\frac{-2}N}$ and $\displaystyle\displaystyle\Sigma_{j=1}^mS_j=S$.
\begin{lem}\label{cle}
 Let ${\bf u}\in H$ such that $\displaystyle\displaystyle\Sigma_{j=1}^mQ_j({\bf u})\leq 0$. Then, there exists $\lambda_0\leq 1$ such that
\begin{enumerate}
\item
$\displaystyle\displaystyle\Sigma_{j=1}^mQ_j({\bf u}_{\lambda_0})=0$;
\item
$\lambda_0=1$ if and only if $\displaystyle\displaystyle\Sigma_{j=1}^mQ_j({\bf u})=0$;
\item
$\frac{\partial}{\partial\lambda}S({\bf u}_{\lambda})>0$ for $\lambda\in (0,\lambda_0)$ and $\frac{\partial}{\partial\lambda}S({\bf u}_{\lambda})<0$ for $\lambda\in (\lambda_0,\infty)$;
\item
$\lambda\rightarrow S_j({\bf u}_{\lambda})$ is concave on $(\lambda_0,\infty)$;
\item
$\frac{\partial}{\partial\lambda}S_j({\bf u}_{\lambda})=\frac N{2\lambda}Q_j({\bf u}_{\lambda}).$
\end{enumerate}
\end{lem}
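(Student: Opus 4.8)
\emph{Strategy.} The plan is to reduce all five assertions to the elementary analysis of one scalar function of $\lambda>0$. First I record the scaling identities: since ${\bf u}_\lambda=\lambda^{N/2}{\bf u}(\lambda\cdot)$ preserves the $L^2$–norm while $\|\nabla (u_j)_\lambda\|^2=\lambda^2\|\nabla u_j\|^2$ and $\int_{\R^N}|(u_j)_\lambda(u_k)_\lambda|^p\,dx=\lambda^{N(p-1)}\int_{\R^N}|u_ju_k|^p\,dx$, writing $b_j:=\sum_{k=1}^m a_{j,k}\int_{\R^N}|u_ju_k|^p\,dx\ge0$ we get
\[
S_j({\bf u}_\lambda)=\tfrac12\|u_j\|^2+\tfrac{\lambda^2}{2}\|\nabla u_j\|^2-\tfrac{\lambda^{N(p-1)}}{2p}\,b_j,\qquad Q_j({\bf u}_\lambda)=\tfrac{2}{N}\lambda^2\|\nabla u_j\|^2-\tfrac{p-1}{p}\lambda^{N(p-1)}b_j .
\]
Differentiating the first identity in $\lambda$ and comparing with the second immediately gives item (5): $\partial_\lambda S_j({\bf u}_\lambda)=\lambda\|\nabla u_j\|^2-\tfrac{N(p-1)}{2p}\lambda^{N(p-1)-1}b_j=\tfrac{N}{2\lambda}Q_j({\bf u}_\lambda)$. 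Summing over $j$ and using $\sum_jS_j=S$ and $\sum_jQ_j=K_{1,-\frac2N}$ yields $\partial_\lambda S({\bf u}_\lambda)=\tfrac{N}{2\lambda}f(\lambda)$, where $f(\lambda):=\sum_{j=1}^m Q_j({\bf u}_\lambda)=\tfrac{2}{N}A\lambda^2-\tfrac{p-1}{p}B\lambda^{N(p-1)}$, $A:=\sum_j\|\nabla u_j\|^2$, $B:=\sum_j b_j$.

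Next I would study $f$. We may assume ${\bf u}\neq0$, hence $A>0$ (an $H^1(\R^N)$ function with vanishing gradient vanishes); then the hypothesis $f(1)=K_{1,-\frac2N}({\bf u})\le0$ forces $B>0$. Since $p>p_*=1+\tfrac2N$ we have $q:=N(p-1)-2>0$, so $f(\lambda)=\lambda^2\big(\tfrac{2}{N}A-\tfrac{p-1}{p}B\lambda^{q}\big)$ and the bracket decreases strictly from $\tfrac2N A>0$ to $-\infty$. Therefore there is a unique $\lambda_0>0$ with $f(\lambda_0)=0$, namely $\lambda_0^{q}=\tfrac{2pA}{N(p-1)B}$, and $f>0$ on $(0,\lambda_0)$, $f<0$ on $(\lambda_0,\infty)$; this is exactly item (1). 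Evaluating at $\lambda=1$, the bound $f(1)\le0$ together with $f>0$ on $(0,\lambda_0)$ gives $\lambda_0\le1$, with equality if and only if $f(1)=0$, i.e. $\sum_jQ_j({\bf u})=0$, which is item (2). Item (3) is then immediate, since $\partial_\lambda S({\bf u}_\lambda)=\tfrac{N}{2\lambda}f(\lambda)$ has the sign of $f(\lambda)$.

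For item (4) I would differentiate $S_j({\bf u}_\lambda)$ once more: $\partial_\lambda^2 S_j({\bf u}_\lambda)=\|\nabla u_j\|^2-\tfrac{N(p-1)(N(p-1)-1)}{2p}\lambda^{N(p-1)-2}b_j$, and combine this with the formula for $\partial_\lambda S_j$ above to obtain
\[
\lambda\,\partial_\lambda^2 S_j({\bf u}_\lambda)=\partial_\lambda S_j({\bf u}_\lambda)-\tfrac{N(p-1)(N(p-1)-2)}{2p}\lambda^{N(p-1)-1}b_j .
\]
Because $N(p-1)>2$ the last term is nonpositive, so whenever $\partial_\lambda S_j({\bf u}_\lambda)=\tfrac{N}{2\lambda}Q_j({\bf u}_\lambda)\le0$ one gets $\partial_\lambda^2 S_j({\bf u}_\lambda)\le0$; thus it remains to check that $Q_j({\bf u}_\lambda)\le0$ on $(\lambda_0,\infty)$, which would conclude that $\lambda\mapsto S_j({\bf u}_\lambda)$ is concave there.

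All the computations are routine once the three scaling identities are in place; the only part requiring genuine care is the sign analysis of the scalar map $\lambda\mapsto c_1\lambda^2-c_2\lambda^{q}$ with $c_1,c_2>0$, $q>2$ (which drives items (1)--(3)) and, for item (4), controlling the sign of $Q_j({\bf u}_\lambda)$ for $\lambda\ge\lambda_0$ so that the nonpositive correction term in the displayed identity is dominant. I expect the latter verification to be the main obstacle.
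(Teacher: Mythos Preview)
Your treatment of items (1)--(3) and (5) is correct and is essentially the paper's argument written out in full: the paper records the same scaling formulas for $Q_j({\bf u}_\lambda)$ and $\partial_\lambda S_j({\bf u}_\lambda)$, derives (5), and then dismisses (1)--(3) with ``a monotony argument'', which is exactly your analysis of $f(\lambda)=\lambda^2\big(\tfrac{2}{N}A-\tfrac{p-1}{p}B\lambda^{q}\big)$.

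For item (4), the obstacle you single out is real, and in fact it cannot be overcome as the statement is written. The sign of $Q_j({\bf u}_\lambda)$ is governed by the individual ratio $\|\nabla u_j\|^2/b_j$, while $\lambda_0$ is fixed by the aggregate ratio $A/B$; nothing in the hypotheses forces these to coincide. If for some index $j$ one has $\|\nabla u_j\|^2/b_j>A/B$, then the zero $\lambda_j$ of $Q_j({\bf u}_\lambda)$ lies to the right of $\lambda_0$, so $Q_j({\bf u}_\lambda)>0$ on $(\lambda_0,\lambda_j)$ and your second--derivative identity gives no conclusion there. Concretely, $\partial_\lambda^2 S_j({\bf u}_\lambda)\big|_{\lambda=\lambda_0}=\|\nabla u_j\|^2-(N(p-1)-1)\tfrac{A}{B}\,b_j$, which can be strictly positive for such $j$, so $\lambda\mapsto S_j({\bf u}_\lambda)$ need not be concave on all of $(\lambda_0,\infty)$.

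The paper's own proof of (4) consists of the single phrase ``it is sufficient to compute using (3)''. Since (3) is a statement about $S=\sum_j S_j$, the intended assertion is almost certainly the concavity of $\lambda\mapsto S({\bf u}_\lambda)$ on $(\lambda_0,\infty)$, and the subscript $j$ in the statement is a slip. For the summed version your argument closes at once: summing your displayed identity over $j$ yields
\[
\lambda\,\partial_\lambda^2 S({\bf u}_\lambda)=\partial_\lambda S({\bf u}_\lambda)-\tfrac{N(p-1)(N(p-1)-2)}{2p}\,B\,\lambda^{N(p-1)-1},
\]
and on $(\lambda_0,\infty)$ the first term is negative by (3) while the second is negative since $B>0$ and $N(p-1)>2$. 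This weaker form of (4) is also all that the rest of the paper requires.
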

\begin{proof}
We have
$$Q_j({\bf u}_\lambda)=\frac{2\lambda^2}N\|\nabla u_j\|^2-(1-\frac1p)\lambda^{N(p-1)}\displaystyle\displaystyle\Sigma_{k=1}^ma_{j,k}\int_{\R^N}|u_ju_k|^p\,dx.$$
Moreover, with previous computations
\begin{eqnarray*}
\partial_{\lambda}S_j({\bf u}_{\lambda})
&=&\lambda\|\nabla v_j\|^2-\frac N2(1-\frac1p)\lambda^{N(p-1)-1}\displaystyle\displaystyle\Sigma_{k=1}^ma_{j,k}\int_{\R^N}|u_ju_k|^p\,dx\\
&=&\frac N{2\lambda}Q_j({\bf u}_{\lambda})
\end{eqnarray*}
which proves $(5)$. Now
\begin{eqnarray*}
Q_j(v_\lambda)
&=&\frac{2\lambda^2}N\|\nabla u_j\|^2-(1-\frac1p)\lambda^{N(p-1)}\displaystyle\displaystyle\Sigma_{k=1}^ma_{j,k}\int_{\R^N}|u_ju_k|^p\,dx\\
&=&\frac{2\lambda^2}N\Big[\|\nabla u_j\|^2-\frac N2(1-\frac1p)\lambda^{N(p-1)}\displaystyle\displaystyle\Sigma_{k=1}^ma_{j,k}\int_{\R^N}|u_ju_k|^p\,dx\Big].
\end{eqnarray*}
A monotony argument closes the proof of $(1),(2)$ and $(3)$. For $(4)$, it is sufficient to compute using $(3)$.
\end{proof}
In the case $(\alpha,\beta)=(1,-\frac2N)$, we will use $T:=S-\frac4NK_{1,-\frac2N}$ rather then $H_{\alpha,\beta}$ which is no longer defined.
\begin{lem}\label{dec}
For ${\bf u}\in H$, the following real function is increasing on $\R_+$,
$$\lambda\mapsto T(\lambda{\bf u}).$$
\end{lem}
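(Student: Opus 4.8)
The plan is to compute $T(\lambda{\bf u})$ explicitly as a function of $\lambda>0$ and check that its derivative is positive. First I would recall that for $(\alpha,\beta)=(1,-\frac2N)$ the constraint reads
$$2K_{1,-\frac2N}({\bf u})=\frac4N\sum_{j=1}^m\|\nabla u_j\|^2-\frac{2(p-1)}{p}\sum_{j,k=1}^m a_{jk}\int_{\R^N}|u_ju_k|^p\,dx,$$
so that
$$T({\bf u})=S({\bf u})-\frac4N K_{1,-\frac2N}({\bf u})=\frac12\sum_{j=1}^m\|u_j\|^2+\Big(\frac12-\frac4N\Big)\sum_{j=1}^m\|\nabla u_j\|^2+\Big(\frac4N\frac{p-1}{p}-\frac1{2p}\Big)\sum_{j,k=1}^m a_{jk}\int_{\R^N}|u_ju_k|^p\,dx.$$
Substituting $\lambda{\bf u}$, each term scales by an explicit power of $\lambda$: the $L^2$ and $\dot H^1$ terms pick up $\lambda^2$, while the nonlinear term picks up $\lambda^{2p}$. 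Thus $T(\lambda{\bf u})=a\lambda^2+b\lambda^{2p}$ for constants $a,b\geq0$ depending on ${\bf u}$, whence $\frac{d}{d\lambda}T(\lambda{\bf u})=2a\lambda+2pb\lambda^{2p-1}\geq0$, and it is strictly positive for $\lambda>0$ unless ${\bf u}=0$.

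The only point that requires care is checking that the coefficients $a$ and $b$ are indeed nonnegative. The coefficient of the nonlinear term is $\frac{4(p-1)}{Np}-\frac1{2p}=\frac{8(p-1)-N}{2Np}$, which is nonnegative precisely when $p\geq 1+\frac N8$; since we are in the regime $2\leq N\leq4$ and $p>p_*=1+\frac2N$, one has $1+\frac2N\geq 1+\frac N8$ exactly when $N\leq 4$, so the hypothesis $2\leq N\leq 4$ is what guarantees $b\geq0$. The coefficient $\frac12-\frac4N$ of the gradient term is negative for $N<8$, so one cannot argue termwise; instead I would group $\Big(\frac12-\frac4N\Big)\sum_j\|\nabla u_j\|^2$ together with part of the nonlinear term using the constraint structure, i.e. write $T=H_{1,-2/N}$-type combination, or more directly observe $T({\bf u})=S({\bf u})-\frac4N K_{1,-2/N}({\bf u})$ and note that along the ray $\lambda\mapsto\lambda{\bf u}$ the function $\lambda\mapsto K_{1,-2/N}(\lambda{\bf u})$ changes sign at most once, which already forces $\lambda\mapsto T(\lambda{\bf u})$ to be monotone by the same monotony argument used in Lemma \ref{cle}(3).

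I expect the main (minor) obstacle to be this sign bookkeeping for the gradient coefficient: one must avoid a naive termwise estimate and instead exploit that $T$ is exactly the tangent-line correction $S-\frac1{2\alpha+N\beta}\big(\text{leading part of }K\big)$ evaluated in the degenerate case $2\alpha+N\beta=0$, so that the problematic $\|\nabla u_j\|^2$ contribution is algebraically tied to the nonlinear term. Once the expression $T(\lambda{\bf u})=a\lambda^2+b\lambda^{2p}$ with $a,b\geq0$ is established, monotonicity on $\R_+$ is immediate by differentiation. $\hfill\square$
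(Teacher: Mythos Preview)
Your overall plan---compute $T(\lambda{\bf u})$ explicitly and differentiate---is precisely the paper's approach. The difficulty you ran into (a negative gradient coefficient) is an artifact of a typo in the paper's definition of $T$: the multiplier should be $\tfrac{N}{4}$, not $\tfrac{4}{N}$. Indeed, the paper's own computation in the proof of the lemma displays
\[
T(\lambda{\bf u})=\frac{\lambda^2}{2}\sum_{j=1}^m\|u_j\|^2+\frac{N}{8}\Big(1-\frac1p-\frac{2}{Np}\Big)\lambda^{2p}\sum_{j,k}a_{jk}\int_{\R^N}|u_ju_k|^p\,dx,
\]
with \emph{no} gradient term; this is exactly what one obtains from $T=S-\tfrac{N}{4}K_{1,-2/N}$, since the gradient coefficient becomes $\tfrac12-\tfrac{N}{4}\cdot\tfrac{2}{N}=0$. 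With this correction, the nonlinear coefficient is $\tfrac{N(p-1)-2}{4p}$, which is strictly positive simply because $p>p_*=1+\tfrac{2}{N}$---no restriction $N\le 4$ is needed. Monotonicity on $\R_+$ then follows by differentiation, exactly as you say.

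Two further remarks on your write-up. First, even with the (erroneous) multiplier $\tfrac{4}{N}$, your gradient coefficient is miscomputed: from $K_{1,-2/N}=\tfrac{2}{N}\sum_j\|\nabla u_j\|^2-\tfrac{p-1}{p}\sum_{j,k}a_{jk}\int|u_ju_k|^p$ one gets $\tfrac12-\tfrac{4}{N}\cdot\tfrac{2}{N}=\tfrac12-\tfrac{8}{N^2}$, not $\tfrac12-\tfrac{4}{N}$. Second, your fallback argument via Lemma~\ref{cle} does not apply here: that lemma concerns the $L^2$-invariant scaling ${\bf u}_\lambda=\lambda^{N/2}{\bf u}(\lambda\,\cdot)$, not the multiplicative scaling $\lambda{\bf u}$, and the monotonicity of $S$ along one family says nothing about $T$ along the other. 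So the ``workaround'' paragraph should be dropped; once the correct multiplier is used, the proof is the single computation above.
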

\begin{proof}
Denoting ${\bf u}:=(u_1,..,u_m)\in H$, we compute
{\small\begin{gather*}
T(\lambda{\bf u})=\frac{\lambda^2}2\displaystyle\displaystyle\Sigma_{j=1}^m\Big(\|u_j\|^2+\frac{N\lambda^{2p-2}}4(1-\frac1p-\frac2{Np})\displaystyle\displaystyle\Sigma_{k=1}^m\int_{\R^N}|u_ju_k|^p\,dx\Big);\\
\partial_\lambda T(\lambda{\bf u})={\lambda}\displaystyle\displaystyle\Sigma_{j=1}^m\Big(\|u_j\|^2+\frac{N\lambda^{2p-3}}2(p-1-\frac2{N})\displaystyle\displaystyle\Sigma_{k=1}^m\int_{\R^N}|u_ju_k|^p\,dx\Big);
\end{gather*}}
The proof is ended because $p>p_*$.
\end{proof}
As previously, we can express the minimizing number $m_{1,-\frac2N}$ with a negative constraint.
\begin{prop}\label{enf}
We have
$$m_{1,-\frac2N}=\displaystyle\inf_{0\neq{\bf u}\in H}\{T({\bf u}),\quad K_{1,-\frac2N}({\bf u})\leq0\}.$$
\end{prop}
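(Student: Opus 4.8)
The plan is to mimic the proof of Lemma \ref{Lemma}, replacing the family $({\bf u}^\lambda)^{\alpha,\beta}$ by the $L^2$-invariant scaling ${\bf u}_\lambda:=\lambda^{N/2}{\bf u}(\lambda\cdot)$, and the functional $H_{\alpha,\beta}$ (which is undefined when $2\alpha+N\beta=0$) by $T=S-\frac4NK_{1,-\frac2N}$. Denote by $a$ the right-hand side infimum. Since every competitor ${\bf u}$ for $m_{1,-\frac2N}$ satisfies $K_{1,-\frac2N}({\bf u})=0\le 0$ and, by the Pohozaev-type proposition, $T({\bf u})=S({\bf u})$ on that constraint, we immediately get $a\le m_{1,-\frac2N}$. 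Hence it suffices to prove the reverse inequality $m_{1,-\frac2N}\le a$.

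First I would take any ${\bf u}\in H$ with $K_{1,-\frac2N}({\bf u})=\Sigma_j Q_j({\bf u})\le 0$ and apply Lemma \ref{cle}: there is $\lambda_0\le 1$ with $\Sigma_j Q_j({\bf u}_{\lambda_0})=0$, i.e. $K_{1,-\frac2N}({\bf u}_{\lambda_0})=0$, so ${\bf u}_{\lambda_0}$ is admissible for $m_{1,-\frac2N}$ and $m_{1,-\frac2N}\le S({\bf u}_{\lambda_0})=T({\bf u}_{\lambda_0})$ (again using $T=S$ on the zero set of $K_{1,-\frac2N}$). Next I would show $T({\bf u}_{\lambda_0})\le T({\bf u})$. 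Since $\lambda_0\le 1$ this is a monotonicity statement for $\lambda\mapsto T({\bf u}_\lambda)$ on $(0,1]$, or at least the weaker fact that this map is nondecreasing up to $\lambda=1$ once $\Sigma_j Q_j({\bf u})\le 0$. The cleanest route: write $\partial_\lambda T({\bf u}_\lambda)$ explicitly. Using $\partial_\lambda S({\bf u}_\lambda)=\frac N{2\lambda}\Sigma_jQ_j({\bf u}_\lambda)$ (Lemma \ref{cle}(5), summed) and the scaling law $K_{1,-\frac2N}({\bf u}_\lambda)=\Sigma_jQ_j({\bf u}_\lambda)=\frac{2\lambda^2}N\Sigma_j\|\nabla u_j\|^2-(1-\frac1p)\lambda^{N(p-1)}\Sigma_{j,k}a_{jk}\int|u_ju_k|^p$, one differentiates $T({\bf u}_\lambda)=S({\bf u}_\lambda)-\frac4NK_{1,-\frac2N}({\bf u}_\lambda)$ and checks the resulting expression is $\ge 0$ for $\lambda\in(0,1]$ whenever $\Sigma_jQ_j({\bf u})\le 0$; concretely, on $(0,\lambda_0)$ Lemma \ref{cle}(3) gives $\partial_\lambda S({\bf u}_\lambda)>0$, and the $-\frac4N K_{1,-\frac2N}({\bf u}_\lambda)$ term is handled by noting $K_{1,-\frac2N}({\bf u}_\lambda)\ge 0$ is increasing then decreasing so that the combination stays monotone on $(0,\lambda_0]$, while on $[\lambda_0,1]$ (possibly empty) $\Sigma_jQ_j({\bf u}_\lambda)\le 0$ makes $S$ nonincreasing but $K_{1,-\frac2N}$ negative and decreasing, again giving $\partial_\lambda T\ge 0$. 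Then $T({\bf u}_{\lambda_0})\le T({\bf u}_1)=T({\bf u})$, hence $m_{1,-\frac2N}\le T({\bf u})$, and taking the infimum over such ${\bf u}$ yields $m_{1,-\frac2N}\le a$.

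To finish, I would note that the constraint set $\{K_{1,-\frac2N}({\bf u})\le 0\}$ can be reduced to $\{K_{1,-\frac2N}({\bf u})<0\}$ by a density/continuity argument (if $K_{1,-\frac2N}({\bf u})=0$ already, ${\bf u}$ is itself admissible for $m_{1,-\frac2N}$, so nothing to prove; if $K_{1,-\frac2N}({\bf u})<0$ the scaling argument above applies directly with $\lambda_0<1$ by Lemma \ref{cle}(2)), so no separate boundary case is needed. The main obstacle, as usual in these potential-well lemmas, is pinning down the exact sign of $\partial_\lambda T({\bf u}_\lambda)$ on the whole interval $(0,1]$: unlike Lemma \ref{dec}, where the scaling is the linear dilation $\lambda\mapsto\lambda{\bf u}$ and monotonicity of $T$ follows from $p>p_*$ by inspection, here the $L^2$-critical scaling ${\bf u}_\lambda$ mixes the $\lambda^2$ growth of the gradient term with the $\lambda^{N(p-1)}$ growth of the nonlinear term, and one must verify that the correction $-\frac4NK_{1,-\frac2N}$ does not destroy the monotonicity that Lemma \ref{cle}(3) supplies for $S$ alone. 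I expect this to reduce, after collecting terms, to the single inequality $p>p_*$ (equivalently $N(p-1)>2$), exactly as in Lemma \ref{dec}, since that is precisely the exponent condition making the nonlinear contribution superlinear in $\lambda$; so the computation, though slightly tedious, should close cleanly.
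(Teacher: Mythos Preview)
Your strategy is valid but differs from the paper's, and one of your informal sub-arguments is not correct as written.

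\textbf{Comparison with the paper.} The paper does \emph{not} use the $L^2$-invariant scaling ${\bf u}_\lambda=\lambda^{N/2}{\bf u}(\lambda\cdot)$ here. It uses the \emph{linear} scaling $\lambda\mapsto\lambda{\bf u}$: given ${\bf u}$ with $K_{1,-\frac2N}({\bf u})<0$, one checks directly that $K_{1,-\frac2N}(\lambda{\bf u})>0$ for small $\lambda$ (the $\lambda^2$ gradient term beats the $\lambda^{2p}$ nonlinear term), so by continuity there is $\lambda\in(0,1)$ with $K_{1,-\frac2N}(\lambda{\bf u})=0$; then Lemma~\ref{dec} gives $T(\lambda{\bf u})\le T({\bf u})$ immediately, and the proof is one line. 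The advantage is that the needed monotonicity is exactly Lemma~\ref{dec}, already proved; nothing new has to be computed.

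\textbf{Your route.} Using Lemma~\ref{cle} to produce $\lambda_0\le 1$ with $K_{1,-\frac2N}({\bf u}_{\lambda_0})=0$ is fine, and the reduction to showing $T({\bf u}_{\lambda_0})\le T({\bf u})$ is correct. But your verbal argument for this inequality is flawed: on $[\lambda_0,1]$ you say ``$S$ is nonincreasing and $K_{1,-\frac2N}$ is negative and decreasing, hence $\partial_\lambda T\ge 0$''. That does not follow: $T=S-cK$ with $S$ decreasing and $-cK$ increasing gives no sign for $\partial_\lambda T$. You must do the computation you postpone to the last paragraph. It does close: since $T$ is chosen so that the gradient contribution cancels, one has
\[
T({\bf u}_\lambda)=\tfrac12\sum_j\|u_j\|^2+\gamma\,\lambda^{N(p-1)}\sum_{j,k}a_{jk}\int_{\R^N}|u_ju_k|^p\,dx,
\]
with $\gamma=\frac{N(p-1)-2}{4p}>0$ precisely because $p>p_*$. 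Hence $\lambda\mapsto T({\bf u}_\lambda)$ is increasing on $(0,\infty)$ and $T({\bf u}_{\lambda_0})\le T({\bf u})$ follows from $\lambda_0\le 1$. So your final expectation is right, but the qualitative ``$S$ down, $-K$ up'' reasoning should be replaced by this one-line computation. Once that is fixed, your proof is correct; the paper's version is simply shorter because Lemma~\ref{dec} has already done the work for the linear scaling.
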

\begin{proof}
Letting $m_1$ the right hand side, it is sufficient to prove that $m_{1,-\frac2N}\leq m_1$. Take ${\bf u}\in H$ such that $K_{1,-\frac2N}({\bf u})<0$ then by Lemma \ref{cle} and the facts that $\lambda\mapsto T(\lambda{\bf u})$ is increasing, there exists $\lambda\in(0,1)$ such that $K_{1,-\frac2N}(\lambda{\bf u})=0$ and $m_{1,-\frac2N}\leq T(\lambda{\bf u})\leq T({\bf u})$. The proof is closed.
\end{proof}
{\bf Proof of theorem \ref{t1}}\\
\underline{First case $(N,\alpha) \neq (2,0).$}\\
Let $(\phi_n):=(\phi_1^n,...,\phi_m^n) $ be a minimizing sequence, namely
\begin{equation} \label{suite}0\neq (\phi_n) \in H,\quad K(\phi_n) = 0\quad \mbox{and}\quad \lim_n H(\phi_n) = \lim_n S(\phi_n) = m.\end{equation}
With a rearrangement argument via Lemma \ref{Lemma}, we can assume that $(\phi_n)$ is radial decreasing and satisfies \eqref{suite}.\\
$\bullet$ First step: $(\phi_n)$ is bounded in $H.$\\
First subcase $\alpha\neq 0.$ Write
{\small\begin{gather*}
\alpha \Big(\displaystyle \sum_{j=1}^m \|\phi_j^n\|_{H^1}^2  - \displaystyle\sum_{j,k=1}^m a_{jk}\displaystyle\int_{\R^N}|\phi_j^n \phi_k^n|^p\,dx\Big) = \frac{\beta}{2}\Big( 2 \displaystyle \sum_{j=1}^m \|\nabla \phi_j^n\|^2 - N \displaystyle\sum_{j=1}^m \|\phi_j^n\|_{H^1}^2
 + \frac{N}{p} \displaystyle\sum_{j,k=1}^m a_{jk}\displaystyle \int_{\R^N}|\phi_j^n\phi_k^n|^p\,dx\Big);\\
\displaystyle\sum_{j=1}^m  \| \phi_j^n\|_{H^1}^2   -  \frac{1}{p}\displaystyle\sum_{j,k=1}^m a_{jk} \displaystyle\int_{\R^N}|\phi_j^n \phi_k^n|^{p}\,dx \rightarrow 2m.
\end{gather*}}
Denoting $\lambda:= \frac{\beta}{2\alpha},$ yields
{\small$$\displaystyle \sum_{j=1}^m \|\phi_j^n\|_{H^1}^2  - \displaystyle\sum_{j,k=1}^m a_{jk}\displaystyle\int_{\R^N}|\phi_j^n \phi_k^n|^p\,dx = \lambda\Big( 2 \displaystyle \sum_{j=1}^m \|\nabla \phi_j^n\|^2 - N \displaystyle\sum_{j=1}^m \|\phi_j^n\|_{H^1}^2+ \frac{N}{p} \displaystyle\sum_{j,k=1}^m a_{jk}\displaystyle \int_{\R^N}|\phi_j^n\phi_k^n|^p\,dx\Big). $$}
So the following sequences are bounded
\begin{gather*}
- 2 \lambda \displaystyle\sum_{j=1}^m\|\nabla\phi_j^n\|^2 + \displaystyle \sum_{j=1}^m \|\phi_j^n\|_{H^1}^2  - \displaystyle\sum_{j,k=1}^m a_{jk}\displaystyle\int_{\R^N}|\phi_j^n \phi_k^n|^p\,dx;\\
\displaystyle\sum_{j=1}^m \|\phi_j^n\|_{H^1}^2 - \frac{1}{p} \displaystyle\sum_{j,k=1}^m a_{jk}\displaystyle \int_{\R^N}|\phi_j^n\phi_k^n|^p\,dx. 
\end{gather*}
Thus, for any real number $a,$ the following sequence is also bounded
$$ 2 \lambda \displaystyle\sum_{j=1}^m\|\nabla \phi_j^n\|^2  + (a - 1) \displaystyle\sum_{j=1}^m \|\phi_j^n\|_{H^1}^2 + (1 - \frac{a}{p}) \displaystyle\sum_{j,k=1}^m a_{jk}\displaystyle \int_{\R^N}|\phi_j^n\phi_k^n|^p\,dx.$$
Choosing $a\in(1,p),$ it follows that $(\phi_n)$ is bounded in $H.$\\
Second subcase $\alpha=0$ and $N\geq3.$ Write
$$\sum_{j=1}^m \|\nabla \phi_j^n\|^2\lesssim H_{0,\beta}({\bf \phi^n})=\frac{1}{2\alpha +N\beta }\Big[\displaystyle\sum_{j=1}^m 2\beta \|\nabla\phi_j^n\|^2  + \alpha (1-\frac{1}{p})\displaystyle\sum_{j,k=1}^m a_{jk}\displaystyle\int_{\R^N}|\phi_j^n\phi_k^n|^{p}\,dx\Big]\leq m.$$
Assume that $\displaystyle\lim_n\displaystyle\sum_{j=1}^m \| \phi_j^n\|=\infty$. Then, taking account of the interpolation inequality \eqref{Nirenberg}, we get
$$\sum_{j=1}^m \| \phi_j^n\|^2\lesssim K^Q({\bf \phi^n})=(\alpha+\frac{N\beta}{2p})\displaystyle\Sigma_{1\leq j,k\leq m}\int_{\R^N}|u_ju_k|^p\,dx\lesssim \left(\displaystyle\sum_{j=1}^{m}\|\phi_j^n\|^2\right)^{\frac{N-p(N -2)}{2}}.$$
This is a contradiction because $\frac{N-p(N -2)}{2}=p-\frac{N(p-1)}2<1$.\\
$\bullet$ Second step: the limit of $(\phi_n)$ is nonzero and $m>0.$\\
Taking account of the compact injection \eqref{radial}, we take
$$ (\phi_1^n,...,\phi_m^n) \rightharpoonup \phi= (\phi_1,...,\phi_m)\quad \mbox{in}\quad H$$
and
$$ (\phi_1^n,...,\phi_m^n) \rightarrow  (\phi_1,...,\phi_m) \quad\mbox{in}\quad (L^{2p})^{(m)}.$$
The equality $K(\phi_n) =0$ implies that
$$  \frac{2\alpha + (N-2)\beta}{2} \displaystyle\sum_{j=1}^m \|\nabla\phi_j^n\|^2 + \frac{2\alpha + N\beta}{2} \displaystyle\sum_{j=1}^m \|\phi_j^n\|^2 =\frac{2\alpha p + N\beta}{2p} \displaystyle\sum_{j,k=1}^m a_{jk}\displaystyle \int_{\R^N} |\phi_j^n \phi_k^n|^p\,dx.$$
Assume that $\phi =0$. Using H\"older inequality 
$$\|\phi_j^n\phi_k^n\|_p^p \leq \|\phi_j^n\|_{2p}^p \|\phi_k^n\|_{2p}^p \rightarrow \|\phi_j\|_{2p}^p \|\phi_k\|_{2p}^p = 0.$$
Now, by lemma \ref{K>0} yields $K(\phi_n)>0$ for large $n$. This contradiction implies that 
$$\phi \neq 0.$$
With lower semi continuity of the $H^1$ norm, we have 
\begin{eqnarray*}
0 &=& \liminf_n K(\phi_n)\\
&\geq& \frac{2\alpha + (N-4)\beta}{2} \liminf_n\displaystyle\sum_{j=1}^m \|\nabla \phi_j^n\|^2 + \frac{2\alpha + N\beta}{2} \liminf_n\displaystyle\sum_{j=1}^m \|\phi_j^n\|^2\\
& -&\frac{2\alpha p + N\beta}{2p} \displaystyle\sum_{j,k=1}^m a_{jk}\displaystyle \int_{\R^N} |\phi_j \phi_k|^p\,dx\\
&\geq& K(\phi).
\end{eqnarray*}
Similarly, we have $H(\phi)\leq m.$ Moreover, thanks to Lemma \ref{Lemma}, we can assume that $K(\phi) =0$ and $S(\phi) = H(\phi)\leq m.$ So that $\phi$ is a minimizer satisfying \eqref{suite} and
$$m_{\alpha,\beta}=H(\phi) = \frac{1}{2\alpha + N\beta}\Big( 2\beta\displaystyle\sum_{j=1}^m \|\nabla\phi_j\|^2  +  \alpha(1 - \frac{1}{p})\displaystyle\sum_{j,k=1}^m a_{jk}\displaystyle\int_{\R^N} |\phi_j\phi_k|^p\,dx\Big)>0.$$
$\bullet$ Third step: the limit $\phi$ is a solution to \eqref{E}.\\
There is a Lagrange multiplier $\eta \in \R$ such that $S^\prime(\phi) = \eta K^\prime(\phi).$ Thus
$$0 = K(\phi) = \pounds S(\phi)= \langle S^\prime(\phi), \pounds(\phi)\rangle = \eta\langle K^\prime(\phi), \pounds(\phi)\rangle = \eta\pounds K(\phi) = \eta\pounds^2S(\phi). $$
With a previous computation, we have
{\small\begin{eqnarray*}
-\pounds^2 S(\phi) - (2\alpha+(N-2)\beta)(2\alpha + N\beta)S(\phi)& =& - (\pounds - (2\alpha+(N-2)\beta))(\pounds - (2\alpha + N\beta))S(\phi) \\
&=&\frac{1}{2p}2\alpha(p - 1)(2\alpha(p -1) + 2\beta)\displaystyle\sum_{j,k=1}^m a_{jk}\displaystyle\int_{\R^N}|\phi_j\phi_k|^p\,dx\\
&>0.&
\end{eqnarray*}}
Therefore $\pounds^2S(\phi)<0.$
Thus $\eta =0$ and $S^\prime(\phi) = 0.$ So, $\phi$ is a ground state and $m$ is independent of $\alpha,\, \beta.$\\
\underline{Second case $(\alpha,N)=(0,2)$.}\\
With a rearrangement argument, take $(\phi_n):=(\phi_1^n,...,\phi_m^n) $ be a radial minimizing sequence, satisfying \eqref{suite}.\\
$\bullet$ First step: $(\phi_n)$ is bounded in $H.$\\
Without loss of generality, take $\beta=1.$ We have
\begin{gather*}
 H_{0,1}(\phi_n) = \displaystyle\sum_{j=1}^m \|\nabla\phi_j^n\|^2;\\
 K_{0,1}(\phi_n) = \displaystyle\sum_{j=1}^m\|\phi_j^n\|^2 - \frac{1}{p}\displaystyle\sum_{j,k=1}^m a_{jk}\displaystyle \int_{\R^N} |\phi_j^n\phi_k^n|^p\,dx.\end{gather*}
By \eqref{suite} via the definition of $H_{0,1},$ $(\phi_n)$ is bounded in $(\dot H^1)^{(m)}.$ Now, because
$$ H_{0,1}(\phi_n)= H_{0,1}(\phi_n^\lambda),\quad K_{0,1}(\phi_n^\lambda) = e^{2\lambda}K_{0,1}(\phi_n) =0,$$ 
by the scaling $\phi_n^\lambda:= \phi_n(e^{-\lambda}.),$ we may assume that $\|\phi^n\| =1$ for $j \in [1,m].$ Thus $(\phi_n) $ is bounded in $ H.$\\
$\bullet$ Second step: the limit of $(\phi_n)$ is nonzero and $m>0.$\\
Taking account of the compact injection $\eqref{radial},$ we take
$$ (\phi_1^n,...,\phi_m^n) \rightharpoonup (\phi_1,...,\phi_m)\quad \mbox{in}\quad H$$
and
$$ (\phi_1^n,...,\phi_m^n) \rightarrow (\phi_1,...,\phi_m) \quad\mbox{in}\quad (L^{2p})^{(m)}.$$
Now, by the fact $0 = K (\phi_n),$ we have
$$  1 = \frac{1}{p}\displaystyle\sum_{j,k=1}^m\displaystyle\int_{\R^4} a_{jk}|\phi_j^n\phi_k^n|^p\,dx .$$
Moreover, if $\phi=0,$ we get
$$\|\phi_j^n\phi_k^n\|_p^p\leq \|\phi_j^n\|_{2p}^p\|\phi_k^n\|_{2p}^p\to \|\phi_j\|_{2p}^p\|\phi_k\|_{2p}^p =0,$$
which is a contradiction. Then
$$\phi \neq 0.$$
For $0<\lambda\longrightarrow0,$ we have 
$$ \displaystyle\sum_{j,k=1}^m a_{jk}\displaystyle\int |\lambda \phi_j|^p |\lambda \phi_k|^p\,dx = o(\displaystyle\Sigma_{j=1}^mK^Q(\lambda \phi_j)) = \lambda^2\displaystyle\Sigma_{j=1}^m K^Q(\lambda \phi_j).$$
 Thus,
$$ K_{0,1}(\phi)<0\Rightarrow \exists \lambda\in(0,1),\;\mbox{s.\,t}\; K_{0,1}(\lambda \phi)=0\,\,\mbox{and}\,\, H_{0,1}(\lambda \phi)\leq H_{0,1}(\phi).$$
So, we may assume that $K(\phi) =0$ and $S(\phi) = H(\phi)\leq m.$ Then $\phi$ is a minimizer and $m = H(\phi)>0.$\\
$\bullet$ Third step: The limit $\phi$ is a solution to \eqref{E}.\\
With a lagrange multiplicator $\eta\in \R,$ we have $S^\prime(\phi) = \eta K^\prime(\phi).$ Moreover, since
$$S^\prime(\phi_j) =-\Delta \phi_j + \phi_j - \displaystyle\sum_{k=1}^m a_{jk}|\phi_k|^p|\phi_j|^{p-2}\phi_j\quad\mbox{and}\quad K^\prime(\phi_j) =2 \phi_j - 2\displaystyle\sum_{k=1}^m a_{jk} |\phi_k|^p|\phi_j|^{p-2}\phi_j. $$
it follows that
$$ \Delta \phi_j = (1-2\eta)\big(\phi_j - \displaystyle\sum_{k=1}^m|\phi_k|^p|\phi_j|^{p -2}\phi_j\big).$$
Since $\langle \Delta \phi_j,\phi_j\rangle<0$ and 
\begin{eqnarray*}
 \displaystyle\sum_{j=1}^m\displaystyle\int_{\R^N}\big(|\phi_j|^2 - \displaystyle\sum_{k=1}^m |\phi_k\phi_j|^p\big)\,dx
&=&  K_{0,1}(\phi)  -\displaystyle\sum_{j=1}^m \|\phi_j\| ^2 + (\frac1{p} - 1) \displaystyle\sum_{j,k=1}^m a_{jk}\displaystyle\int_{\R^N}|\phi_k\phi_j|^p\,dx \\
&=&  -\displaystyle\sum_{j=1}^m \|\phi_j\| ^2 + (\frac1{p} - 1) \displaystyle\sum_{j,k=1}^m a_{jk}\displaystyle\int_{\R^N}|\phi_k\phi_j|^p\,dx                    <0.
\end{eqnarray*}
Then $(1-2\eta)>0.$ Finally, choosing a real number $\lambda$ such that $e^{-2\lambda}(1-2\eta) =1$, existence of a ground state follows taking account of the equality
$$ \Delta (\phi_j(e^{- \lambda}.)) = e^{-2\lambda}(1-2\eta)\Big(\phi_j(e^{-\lambda}.)-\displaystyle\sum_{k=1}^m | \phi_k(e^{-\lambda}.)|^p |\phi_j(e^{-\lambda}.)|^{p - 2}\phi_j(e^{-\lambda}.)   \Big).$$
\underline{Third case $(\alpha,\beta) = (1,-\frac2N).$}\\
 Let $(\phi_n):=(\phi^n_1,..,\phi^n_m)$ a minimizing sequence, namely
\begin{equation}\label{m'}
0\neq\phi_n\in H,\;K_{1,-\frac2N}(\phi_n)= 0\;\mbox{and}\;\lim_nS(\phi_n)=m.
\end{equation}
With a rearrangement argument, we can assume that $\phi_n$ is radial decreasing and satisfies
$$0\neq\phi_n\in H,\;K_{1,-\frac2N}(\phi_n)\leq 0\;\mbox{and}\;\lim_nS(\phi_n)\leq m.$$
We can suppose that $\phi_n$ is radial decreasing and satisfies \eqref{m'}. Indeed, by Lemmas \ref{cle}-\ref{dec}, there exists $\lambda\in(0,1)$ such that $K_{1,-\frac2N}(\lambda\phi_n)=0$ and $T(\lambda\phi_n)\leq m$. Then
\begin{gather*}
\frac2N\displaystyle\displaystyle\Sigma_{j=1}^m\|\nabla\phi_j\|^2=(1-\frac1p)\displaystyle\Sigma_{j,k}a_{j,k}\int_{\R^N}|\phi_j\phi_k|^p\,dx;\\
\displaystyle\displaystyle\Sigma_{j=1}^m\|\phi_j\|_{H^1}^2-\frac1p\displaystyle\Sigma_{j,k}a_{j,k}\int_{\R^N}|\phi_j\phi_k|^p\,dx\rightarrow 2m.
\end{gather*}
So, for any real number $a\neq 0$,
{\small$$\displaystyle\displaystyle\Sigma_{j=1}^m\Big[(1-\frac{2a}N)\|\nabla\phi_j\|^2+\|\phi_j\|^2+(a-\frac{1+a}p)\displaystyle\displaystyle\Sigma_{k=1}^ma_{j,k}\int_{\R^N}|\phi_j\phi_k|^p\Big]\,dx\rightarrow 2m.$$}
Letting $a\in(\frac1{p-1},\frac N2)$ gives that $(\phi_n)$ is bounded in $H$. Taking account of the compact injection $\eqref{radial},$ we take
$$ (\phi_1^n,...,\phi_m^n) \rightharpoonup (\phi_1,...,\phi_m)\quad \mbox{in}\quad H$$
and
$$ (\phi_1^n,...,\phi_m^n) \rightarrow (\phi_1,...,\phi_m) \quad\mbox{in}\quad (L^{2p})^{(m)}.$$
Assume, by contradiction, that $\phi=0$. The equality $K(\phi_n) =0$ implies, via H\"older inequality and the fact that $p_*<p<p^*,$
\begin{eqnarray*}
\frac2N\displaystyle\displaystyle\Sigma_{j=1}^m\|\nabla u_j\|^2
&=&(1-\frac1p)\displaystyle\Sigma_{j,k}a_{j,k}\|\phi_j^n\phi_k^n\|_p^p\\
&\lesssim& \displaystyle\Sigma_{j,k}\|\phi_j^n\|_{2p}^p \|\phi_k^n\|_{2p}^p\\
& \rightarrow&\displaystyle\Sigma_{j,k} \|\phi_j\|_{2p}^p \|\phi_k\|_{2p}^p = 0.
\end{eqnarray*}
Now, by lemma \ref{K>0} yields $K(\phi_n)>0$ for large $n$. This contradiction implies that 
$$\phi \neq 0.$$
Thanks to lower semi continuity of $H^1$ norm, we have $K_{1,-\frac2N}(\phi)\leq0$ and $S(\phi)\leq m$. Using Lemmas \ref{dec}-\ref{cle}, we can assume that
$K_{1,-\frac2N}(\phi)=0$ and $T(\phi)\leq m_{1,-\frac2N}.$ So $\phi$ is a minimizer satisfying
$$0\neq\phi\in H_{rd},\quad K_{1,-\frac2N}(\phi)=0\quad\mbox{ and }\quad T(\phi)=m.$$
This implies that
 $$0<\|\phi\|^2\leq T(\phi)=m.$$
Now, there is a Lagrange multiplier $\eta\in\R$ such that $S'(\phi)=\eta K_{1,-\frac2N}'(\phi)$. Thus
\begin{eqnarray*}
0=K_{1,-\frac2N}(\phi)&=&\mathcal L_{1,-\frac2N}S(\phi)=\langle S'(\phi),\mathcal L_{1,-\frac2N}(\phi)\rangle\\
&=&\eta\langle K_{1,-\frac2N}'(\phi),\mathcal L_{1,-\frac2N}(\phi)\rangle\\
&=&\eta\mathcal L_{1,-\frac2N}K_{1,-\frac2N}(\phi)=\eta\mathcal L_{1,-\frac2N}^2S(\phi).\end{eqnarray*}
With a direct computation, we have $\mathcal L_{1,-\frac2N}(\|\phi_j\|^2)=(\mathcal L_{1,-\frac2N}-\frac4N)(\|\nabla\phi_j\|^2)=0$ and $\mathcal L_{1,-\frac2N}(|\phi_j\phi_k|^p)=2(p-1)|\phi_j\phi_k|^p$. So
\begin{eqnarray*}
-\mathcal L_{1,-\frac2N}(\mathcal L_{1,-\frac2N}-\frac4N)S_j(\phi)
&=&(1-\frac1p)(p-1-\frac2N)\displaystyle\Sigma_{j}\int_{\R^N}|u_ju_k|^p\;dx>0.
\end{eqnarray*}
Then, $-\mathcal L_{1,-\frac2N}^2S(\phi)>0$, so $\eta=0$ and $S'(\phi)=0$. Finally, $\phi$ is a ground state solution to \eqref{E}.\\
\subsection{Existence of vector ground state}
Now, we present a proof of the last part of Theorem \ref{t1}, which deals with existence of a more that one non zero component ground state for large $\mu.$ Take $\phi: =(\phi_1,...,\phi_m)$ such that $(0,...,\phi_{j},...,0)$ is a ground state solution to \eqref{E}. So, $\phi_{j}$ satisfies
$$\Delta  \phi_j - \phi_j + \mu_j \phi_j|{\phi_j}|^{2p-2}=0\quad\mbox{and}\quad\displaystyle\sum_{j=1}^m\|\phi_{j}\|_{H^1}^2 =\displaystyle\sum_{j=1}^m \mu_j\|\phi_{j}\|_{2p}^{2p}.$$
Moreover, by Pohozaev identity it follows that
$$\frac{N - 2}{2}\displaystyle\sum_{j=1}^m\|\nabla \phi_{j}\|^2 + \frac{N}{2}\displaystyle\sum_{j=1}^m\|\phi_{j}\|^2 = \frac{N}{2p}\displaystyle\sum_{j=1}^m \mu_j\|\phi_{j}\|_{2p}^{2p}.$$
Collecting the previous identities, we can write
\begin{equation}\label{phz}
\displaystyle\sum_{j=1}^{m}\|\phi_{j}\|^2 = \Big(1 - \frac{N}{2} + \frac{N}{2p}\Big)\displaystyle\sum_{j=1}^{m} \mu_j\|\phi_{j}\|_{2p}^{2p}.
\end{equation}
Setting, for $t>0,$ the real variable function $\gamma(t):=\big(\phi_{1}(\frac{.}{t}),...,\phi_{m}(\frac{.}{t})\big),$ compute
\begin{eqnarray*}
K_{0,1}(\gamma(t)) &=& \frac{N - 2}{2 }t^{N - 2}\displaystyle\sum_{j=1}^m\|\nabla\phi_j\|^2  + \frac{N }{2}t^{ N}\displaystyle\sum_{j=1}^m\|\phi_j\|^2 - \frac{N}{2p}t^{ N}\displaystyle\sum_{j=1}^m \mu_j\|\phi_j\|_{2p}^{2p} \\ & -& \frac{N}{2p} \mu t^{ N}\displaystyle\sum_{ 1 \leq k\neq j\leq m} \displaystyle\int_{\R^N}|\phi_j\phi_k|^p\,dx
\end{eqnarray*}
and
\begin{eqnarray*}
g(t)
&:=&S(\gamma(t))\\
 &=& \frac{1}{2}t^{N - 2}\displaystyle\sum_{j=1}^m\|\nabla\phi_j\|^2  +  \frac{1}{2}t^{ N}\displaystyle\sum_{j=1}^m\|\phi_j\|^2 - \frac{1}{2p}t^{ N}\displaystyle\sum_{j=1}^m \mu_j\|\phi_j\|_{2p}^{2p} \\
 & -& \frac{1}{2p} \mu t^{ N}\displaystyle\sum_{1\leq k\neq j\leq m} \displaystyle\int_{\R^N}|\phi_j\phi_k|^p\,dx.
\end{eqnarray*} 
Thanks to \eqref{phz}, $ g(t)<0$ for large $t.$ Then, since $g(0)\geq0$ The maximum of $g(t)$ for $t\geq0$ is achieved at $\bar t>0.$ Precisely $g(\bar t) = \displaystyle\max_{t\geq0} g(t).$ Moreover, 
\begin{eqnarray*}
g^{\prime}(\bar t)=0&=& {\bar t}^{N -1}\Big(\frac{N -2}{2} {\bar t}^{-2}\displaystyle\sum_{j=1}^m\|\nabla\phi_j\|^2 + \frac{N}{2}\displaystyle\sum_{j=1}^m\|\phi_j\|^2 - \frac{N}{2p}\displaystyle\sum_{j=1}^m \mu_j\|\phi_j\|_{2p}^{2p} \\&-& \frac{N}{2p}\mu\displaystyle\sum_{1\leq j\neq k\leq m}\displaystyle\int_{\R^N}|\phi_j\phi_k|^p\,dx\Big).
\end{eqnarray*} 
Then,
$${\bar t}= \Big(\frac{N - 2}{N}\Big)^{\frac{1}{2}}\frac{\left(\displaystyle\sum_{j=1}^m\|\nabla\phi_j\|^2\right)^{\frac{1}{2}}}{\left( \frac{1}{p}\displaystyle\sum_{j=1}^m \mu_j\|\phi_j\|_{2p}^{2p}   +  \frac{1}{p}\mu \displaystyle\sum_{1\leq j\neq k\leq m} \displaystyle\int_{\R^N}|\phi_j\phi_k|^p\,dx - \displaystyle\sum_{j=1}^m \|\phi_j\|^2\right)^{\frac{1}{2}}}.$$
Thus, the maximum value of $g$ is 
\begin{eqnarray*}
 g(\bar t)& =& \displaystyle\max_{t\geq0} g(t)\\
 &=&{\frac{2(N -2)^{\frac{N -2}{2}}}{N^{\frac{N}{2}}}}\frac{\left(\displaystyle\sum_{j=1}^m\|\nabla\phi_j\|^2\right)^{\frac{N}{2}}}{\left( \frac{1}{p}\displaystyle\sum_{j=1}^m \mu_j\|\phi_j\|_{2p}^{2p}   +  \frac{1}{p}\mu \displaystyle\sum_{1\leq j\neq k\leq m} \displaystyle\int_{\R^N}|\phi_j\phi_k|^p\,dx - \displaystyle\sum_{j=1}^m \|\phi_j\|^2\right)^{\frac{N - 2}{2}}}. 
\end{eqnarray*}
Now, take $\bar{{\bf u}}:=(\bar{u}_1,..,\bar{u}_m)$ a ground state to \eqref{S}, when $\mu\longrightarrow\infty$, from the previous equality via the fact that $K_{0,1}(\gamma(t))<0$ for large $\mu$, it follows that
$$0<m=S(\bar  {{\bf u}})\leq S(\phi_1(\frac.{\bar t}),...,\phi_m(\frac.{\bar t}))\longrightarrow0.$$
This contradiction achieves the proof.
\section{Invariant sets and applications}
This section is devoted to obtain global and non global existence of solutions to the system \eqref{S}. Precisely, we prove Theorem \ref{t2}. We start with a classical result about stable sets under the flow of \eqref{S}. 
\begin{lem}\label{lem}The sets $A_{\alpha,\beta}^+$ and $A_{\alpha,\beta}^-$ are invariant under the flow of \eqref{S}.
\end{lem}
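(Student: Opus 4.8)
The plan is to combine the conservation laws of \eqref{S} with a continuity / intermediate value argument built on the variational characterization of $m$ provided by Theorem \ref{t1}. First I would observe that, directly from the definitions, $S(\mathbf{u})=E(\mathbf{u})+\frac12\sum_{j=1}^m M(u_j)$, so that conservation of mass and energy along the flow of \eqref{S} forces $t\mapsto S(\mathbf{u}(t))$ to be constant on $[0,T^*)$. In particular the condition $S(\mathbf{u}(t))<m$ is preserved in time, which handles the first defining inequality of both $A_{\alpha,\beta}^+$ and $A_{\alpha,\beta}^-$.

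Next I would treat the sign of $K_{\alpha,\beta}$. Since $\mathbf{u}\in C_{T^*}(H)$ and $K_{\alpha,\beta}\colon H\to\R$ is continuous (it is a combination of $H^1$-norms and $L^{2p}$-type integrals, the latter continuous on $H$ by the Sobolev embedding in Proposition \ref{injection}), the map $t\mapsto K_{\alpha,\beta}(\mathbf{u}(t))$ is continuous on $[0,T^*)$. Suppose $\mathbf{u}(t_0)\in A_{\alpha,\beta}^+$, i.e.\ $K_{\alpha,\beta}(\mathbf{u}(t_0))\ge 0$, and assume for contradiction that $K_{\alpha,\beta}(\mathbf{u}(t_1))<0$ for some $t_1\in[0,T^*)$. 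By the intermediate value theorem there is $\tau$ between $t_0$ and $t_1$ with $K_{\alpha,\beta}(\mathbf{u}(\tau))=0$. If the datum is nonzero, conservation of mass gives $\mathbf{u}(\tau)\neq 0$, hence $\mathbf{u}(\tau)$ is admissible in \eqref{M} and $S(\mathbf{u}(\tau))\ge m$; but $S(\mathbf{u}(\tau))=S(\mathbf{u}(t_0))<m$ by the first paragraph, a contradiction. (If the datum is identically zero, then $\mathbf{u}\equiv 0$ is global and trivially stays in $A_{\alpha,\beta}^+$.) Therefore $K_{\alpha,\beta}(\mathbf{u}(t))\ge 0$ for all $t\in[0,T^*)$, so $A_{\alpha,\beta}^+$ is invariant.

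The case of $A_{\alpha,\beta}^-$ is analogous: if $K_{\alpha,\beta}(\mathbf{u}(t_0))<0$ and $K_{\alpha,\beta}(\mathbf{u}(t_1))\ge 0$ for some later time, continuity produces a time $\tau$ with $K_{\alpha,\beta}(\mathbf{u}(\tau))=0$ and $\mathbf{u}(\tau)\neq 0$ (the datum cannot vanish, since $K_{\alpha,\beta}(\mathbf{u}(t_0))<0$), whence $m\le S(\mathbf{u}(\tau))=S(\mathbf{u}(t_0))<m$, which is absurd; thus the strict inequality $K_{\alpha,\beta}(\mathbf{u}(t))<0$ persists on $[0,T^*)$.

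The only genuinely delicate inputs are (i) that $m$ is indeed the infimum of $S$ over the constraint $\{K_{\alpha,\beta}=0,\ \mathbf{u}\neq 0\}$, and is positive and independent of $(\alpha,\beta)$ — this is exactly Theorem \ref{t1}, which I would invoke — and (ii) the continuity of the flow $t\mapsto\mathbf{u}(t)$ in $H$, needed to run the intermediate value theorem on $t\mapsto K_{\alpha,\beta}(\mathbf{u}(t))$; this is the local well-posedness in $H$ recorded in \cite{saa}. Everything else is routine bookkeeping.
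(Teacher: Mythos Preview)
Your proof is correct and follows exactly the same strategy as the paper's: conservation of $S$ along the flow handles the first defining inequality, and the intermediate value theorem applied to the continuous map $t\mapsto K_{\alpha,\beta}(\mathbf{u}(t))$ produces a nonzero $\mathbf{u}(\tau)$ with $K_{\alpha,\beta}(\mathbf{u}(\tau))=0$ and $S(\mathbf{u}(\tau))<m$, contradicting the definition of $m$. You are in fact slightly more careful than the paper in verifying $\mathbf{u}(\tau)\neq 0$ via mass conservation and in isolating the trivial zero-datum case.
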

\begin{proof} Let $ \Psi \in A_{\alpha,\beta}^+$ and ${\bf u} \in C_{T^*}(H)$ be the maximal solution to \eqref{S}. Assume that ${\bf u}(t_0) \not \in A_{\alpha,\beta}^+$ for some $t_0\in (0,T^*)$. Since $S({\bf u})$ is conserved, we have $K_{\alpha,\beta}({\bf u}(t_0))<0.$ So, with a continuity argument, there exists a positive time $t_1\in (0, t_0)$ such that $ K_{\alpha,\beta}({\bf u}(t_1)) = 0$ and $S({\bf u}(t_1))<m.$ This contradicts the definition of $m.$ The proof is similar in the case of $A_{\alpha,\beta}^-$.
\end{proof}{}
The previous stable sets are independent of the parameter $(\alpha,\beta)$.
\begin{lem}\label{fn}
The sets $A_{\alpha,\beta}^+$ and $A_{\alpha,\beta}^-$ are independent of $(\alpha,\beta)$.
\end{lem}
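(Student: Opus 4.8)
The plan is to reduce everything to the single ``natural'' constraint $K_{1,-\frac2N}$, for which the set membership is governed by the scaling $\lambda\mapsto{\bf u}_\lambda=\lambda^{N/2}{\bf u}(\lambda\,\cdot\,)$ studied in Lemma \ref{cle}. First I would record the algebraic identity, valid for any $(0,0)\neq(\alpha,\beta)\in\R_+^2\cup\{1,-\frac2N\}$ and any $0\neq{\bf u}\in H$, expressing $K_{\alpha,\beta}({\bf u})$ as a positive combination of the three homogeneous quantities $\Sigma_j\|\nabla u_j\|^2$, $\Sigma_j\|u_j\|^2$ and $\Sigma_{j,k}a_{jk}\int|u_ju_k|^p$. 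The point is that the coefficients $2\alpha+(N-2)\beta$, $2\alpha+N\beta$ and $2p\alpha+N\beta$ are all $\geq0$ in this parameter range (and not all zero), so that if $K_{\alpha,\beta}({\bf u})\geq0$ for one admissible pair it is automatically controlled by $K_{1,-\frac2N}$ up to the gradient term, and conversely.

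The key step is the following dichotomy, already contained in Lemmas \ref{cle}--\ref{Lemma} and Proposition \ref{enf}: for $0\neq{\bf u}\in H$ with $S({\bf u})<m$ one has $K_{\alpha,\beta}({\bf u})\geq0$ \emph{for all} admissible $(\alpha,\beta)$ \emph{if and only if} it holds for \emph{some} admissible $(\alpha,\beta)$. Indeed, suppose $K_{\alpha_0,\beta_0}({\bf u})<0$ for some admissible pair. By Lemma \ref{Lemma} (resp.\ Proposition \ref{enf} when $(\alpha_0,\beta_0)=(1,-\frac2N)$) applied along the relevant scaling ${\bf u}^\lambda$, there is $\lambda_0<0$ (resp.\ $\lambda_0<1$) with $K_{\alpha_0,\beta_0}({\bf u}^{\lambda_0})=0$ and $H_{\alpha_0,\beta_0}({\bf u}^{\lambda_0})\leq H_{\alpha_0,\beta_0}({\bf u})$; since $H_{\alpha_0,\beta_0}\leq S$ on the constraint set and $m=m_{\alpha_0,\beta_0}$ by Theorem \ref{t1}, this forces $S({\bf u})\geq H_{\alpha_0,\beta_0}({\bf u})\geq H_{\alpha_0,\beta_0}({\bf u}^{\lambda_0})\geq m$, contradicting $S({\bf u})<m$. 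Hence under the standing assumption $S({\bf u})<m$ the sign of $K_{\alpha,\beta}({\bf u})$ cannot be negative for one admissible pair and nonnegative for another: it is the same for all of them. Therefore the conditions ``$S({\bf u})<m$ and $K_{\alpha,\beta}({\bf u})\geq0$'' and ``$S({\bf u})<m$ and $K_{\alpha,\beta}({\bf u})<0$'' do not depend on the admissible choice of $(\alpha,\beta)$, which is exactly $A_{\alpha,\beta}^+=A_{\alpha',\beta'}^+$ and $A_{\alpha,\beta}^-=A_{\alpha',\beta'}^-$.

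I expect the main obstacle to be bookkeeping the degenerate cases rather than any genuine difficulty: one must check that the monotonicity and ``reparametrisation to the zero set'' arguments of Lemmas \ref{cle}--\ref{dec} and Proposition \ref{enf}, which were stated for the two regimes $(\alpha,\beta)\in\R_+^2$ with $(N,\alpha)\neq(2,0)$, the borderline $(\alpha,N)=(0,2)$, and $(\alpha,\beta)=(1,-\frac2N)$ separately, really do cover every admissible pair, and that in each regime the scaling used ($({\bf u}^\lambda)^{\alpha,\beta}$ versus $\lambda^{N/2}{\bf u}(\lambda\,\cdot\,)$) sends the constraint set to itself in the right direction and makes the relevant functional ($H_{\alpha,\beta}$, or $T$ in the critical case) monotone. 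Once the equivalence ``$K\geq0$ for one $\iff$ for all, under $S<m$'' is in hand, the independence of the sets is immediate.
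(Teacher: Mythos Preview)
Your central implication is broken. You argue: if $S({\bf u})<m$ and $K_{\alpha_0,\beta_0}({\bf u})<0$, then scaling produces ${\bf u}^{\lambda_0}$ with $K_{\alpha_0,\beta_0}({\bf u}^{\lambda_0})=0$ and $H_{\alpha_0,\beta_0}({\bf u}^{\lambda_0})\le H_{\alpha_0,\beta_0}({\bf u})$, whence $m\le H_{\alpha_0,\beta_0}({\bf u}^{\lambda_0})\le H_{\alpha_0,\beta_0}({\bf u})\le S({\bf u})$, a contradiction. But the last inequality is false. Since $H_{\alpha,\beta}=S-\dfrac{1}{2\alpha+N\beta}K_{\alpha,\beta}$ and $2\alpha+N\beta>0$ in the regime where $H_{\alpha,\beta}$ is defined, $K_{\alpha_0,\beta_0}({\bf u})<0$ gives $H_{\alpha_0,\beta_0}({\bf u})>S({\bf u})$, not $\le$. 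Worse, your argument uses only the single pair $(\alpha_0,\beta_0)$ and would therefore prove that $\{S<m\}\cap\{K_{\alpha_0,\beta_0}<0\}=A_{\alpha_0,\beta_0}^{-}$ is empty, which is certainly false. The dichotomy you want (``$K_{\alpha,\beta}({\bf u})<0$ for one admissible pair implies it for all, under $S({\bf u})<m$'') cannot be obtained from a one-parameter monotonicity argument alone; you need an ingredient that links two different pairs.

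The paper supplies that link by a topological connectedness argument rather than an inequality chain. One first notes that the union $A_{\alpha,\beta}^{+}\cup A_{\alpha,\beta}^{-}=\{S<m\}$ is independent of $(\alpha,\beta)$ because $m$ is (Theorem~\ref{t1}). Then one shows, for each admissible pair, that $A_{\alpha,\beta}^{+}$ is open (if $S({\bf u})<m$ and $K_{\alpha,\beta}({\bf u})=0$ then ${\bf u}=0$ by the definition of $m$) and path-connected (the scaling $\lambda\mapsto({\bf u}^\lambda)^{\alpha,\beta}$ contracts it to $0$, using Lemma~\ref{K>0}). For two admissible pairs one writes
\[
A_{\alpha,\beta}^{+}=(A_{\alpha,\beta}^{+}\cap A_{\alpha',\beta'}^{+})\cup(A_{\alpha,\beta}^{+}\cap A_{\alpha',\beta'}^{-}),
\]
a disjoint union of relatively open subsets of the connected set $A_{\alpha,\beta}^{+}$; since $0$ lies in the first piece, the second is empty, hence $A_{\alpha,\beta}^{+}\subset A_{\alpha',\beta'}^{+}$ and equality follows by symmetry. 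If you want to salvage your approach, you would need to show directly that $K_{\alpha',\beta'}({\bf u})\ge 0$ forces $K_{\alpha,\beta}({\bf u})\ge 0$ under $S({\bf u})<m$; one way is precisely to run the contraction $\lambda\to-\infty$ and use continuity of $K_{\alpha,\beta}$ together with the fact that a zero of $K_{\alpha,\beta}$ with $S<m$ must be at ${\bf u}=0$ --- which is exactly the connectedness argument in disguise.
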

\begin{proof}
Let $(\alpha,\beta)$ and $(\alpha',\beta')$ in ${\mathbb R}_+^2\cup{(1,-\frac2N)}-\{(0,0)\}$. We denote, for $\delta\geq0$, the sets 
\begin{gather*}
A_{\alpha,\beta}^{+\delta}:=\{{\bf u}\in H\quad\mbox{s.\, t}\quad S({\bf u})<m-\delta\quad\mbox{and}\quad K_{\alpha,\beta}({\bf u})\geq0\};\\
A_{\alpha,\beta}^{-\delta}:=\{{\bf u}\in H\quad\mbox{s.\, t}\quad S({\bf u})<m-\delta\quad\mbox{and}\quad K_{\alpha,\beta}({\bf u})<0\}.
\end{gather*}
By Theorem \ref{t1}, the reunion $A_{\alpha,\beta}^{+\delta}\cup A_{\alpha,\beta}^{-\delta}$ is independent of $(\alpha,\beta)$. So, it is sufficient to prove that $A_{\alpha,\beta}^{+\delta}$ is independent of $(\alpha,\beta)$.The rescaling ${\bf u}^\lambda:=e^{\alpha\lambda}{\bf u}(e^{-\beta\lambda}.)$ implies that a neighborhood of zero is in $A_{\alpha,\beta}^{+\delta}$. If $S({\bf u})<m$ and $K_{\alpha,\beta}({\bf u})=0$, then $ {\bf u}=0$. So, $A_{\alpha,\beta}^{+\delta}$ is open. Moreover, this rescaling with $\lambda\rightarrow-\infty$ gives that $A_{\alpha,\beta}^{+\delta}$ is contracted to zero and so it is connected. Now, write $$A_{\alpha,\beta}^{+\delta}=A_{\alpha,\beta}^{+\delta}\cap( A_{\alpha',\beta'}^{+\delta}\cup A_{\alpha',\beta'}^{-\delta})=(A_{\alpha,\beta}^{+\delta}\cap A_{\alpha',\beta'}^{+\delta})\cup(A_{\alpha,\beta}^{+\delta}\cap A_{\alpha',\beta'}^{-\delta}).$$ 
Since by the definition, $A_{\alpha,\beta}^{-\delta}$ is open and $0\in A_{\alpha,\beta}^{+\delta}\cap A_{\alpha',\beta'}^{+\delta}$, using a connectivity argument, we have $A_{\alpha,\beta}^{+\delta}=A_{\alpha',\beta'}^{+\delta}$.
\end{proof}
\subsection{Global existence}
With a translation argument, we assume that $t_0 = 0.$ Thus, $S(\Psi)<m$ and with lemma \ref{lem}, ${\bf u}(t)\in A_{\alpha,\beta}^+$ for any $t\in [0, T^*).$ Moreover,
\begin{eqnarray*}
m&\geq& \big(S -\frac{1}{2 + N} K_{1,1}\big) ({\bf u})\\
&=& H_{1,1}({\bf u})\\
&=&\frac{1}{2 + N}\Big( 2\displaystyle\sum_{j=1}^m\|\nabla u_j\|^2  + (1 - \frac{1}{p})\displaystyle\sum_{j,k=1}^ma_{jk}\displaystyle\int_{\R^N}|u_ju_k|^p\,dx \Big)\\
&\geq& \frac{2}{2 + N}\displaystyle\sum_{j=1}^m\|\nabla u_j\|^2.
\end{eqnarray*}
Thus, ${\bf u}$ is bounded in (\.H$^1)^{(m)}.$ Precisely
$$ \sup_{0\leq t\leq T^*}\displaystyle\sum_{j=1}^m\|\nabla u_j\|^2\leq \frac{(2 + N)m}{2}.$$
Moreover, since the $L^2$ norm is conserved, we have
$$ \sup_{0\leq t\leq T^*}\displaystyle\sum_{j=1}^m\| u_j\|_{H^1}^2 <\infty.$$
Finally, $T^* = \infty.$
\subsection{Non global existence}
With a translation argument, we assume that $t_0=0$. Thus, $S({\bf u})<m$ and with Lemma \ref{lem}, ${\bf u}(t)\in A_{\alpha,\beta}^-$ for any $t\in[0,T^*)$. By contradiction, assume that $T^*=\infty$. Take the real function $Q(t):=\sum_{j=1}^m\int_{\R^N}|x|^2|u_j(t)|^2\,dx$. With \eqref{vrl}, we get
$$\frac18Q''(t)=\frac2NK_{1,-\frac{2}N}({\bf u(t)}).$$
We infer that there exists $\delta>0$ such that $K_{1,-\frac{2}N}({\bf u(t)})<-\delta$ for large time. Otherwise, there exists a sequence of positive real numbers $t_n\rightarrow+\infty$ such that $K_{1,\frac{-2}N}({\bf u}(t_n))\rightarrow0.$ By Lemma \ref{Lemma}, yields
$$m\leq(S-K_{1,-\frac{2}N})({\bf u}(t_n))=S({\bf u}(0,.))-K_{1,-\frac{2}N}({\bf u}(t_n))\rightarrow S({\bf u}(0,.)))<m.$$
This absurdity finishes the proof of the claim. Thus $Q''<-8\delta$. Integrating twice, $Q$ becomes negative for some positive time. This contradiction closes the proof.
\section{Strong instability}
This section is devoted to prove Theorem \ref{t3} about strong instability of standing waves.
The next intermediate result reads
\begin{lem}
Let $\Psi$ to be a ground state solution of \eqref{E}, $\lambda>1$ a real number close to one and ${\bf u}_\lambda$ the solution to \eqref{S} with data $\Psi_{\lambda}:=\lambda^{\frac{N}2}\Psi(\lambda.)$. Then, for any $t\in(0,T^*)$,
$$S({\bf u}_\lambda(t))<S(\Psi)\quad\mbox{and}\quad K_{1,-\frac2N}({\bf u}_\lambda(t))<0.$$
\end{lem}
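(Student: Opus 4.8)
The plan is to reduce everything to a one–variable computation in $\lambda$, using that the action is a conserved quantity of \eqref{S}. First I would note that $S({\bf u})=E({\bf u})+\frac12\sum_{j=1}^m M(u_j)$, so $S$ is conserved along the flow of \eqref{S}; hence $S({\bf u}_\lambda(t))=S(\Psi_\lambda)$ for every $t\in(0,T^*)$, and for the first inequality it suffices to compare $S(\Psi_\lambda)$ with $S(\Psi)$. Recall also that $S(\Psi)=m$ since $\Psi$ is a ground state and, by Theorem \ref{t1}, $m$ is independent of $(\alpha,\beta)$.

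Next, since $\Psi_\lambda=\lambda^{N/2}\Psi(\lambda\cdot)$ is the $L^2$–invariant scaling, one has $\|\Psi_{\lambda,j}\|^2=\|\Psi_j\|^2$, $\|\nabla\Psi_{\lambda,j}\|^2=\lambda^2\|\nabla\Psi_j\|^2$ and $\int_{\R^N}|\Psi_{\lambda,j}\Psi_{\lambda,k}|^p\,dx=\lambda^{N(p-1)}\int_{\R^N}|\Psi_j\Psi_k|^p\,dx$. Setting $f(\lambda):=S(\Psi_\lambda)$, a combination of $\lambda^2$ and $\lambda^{N(p-1)}$, I would compute $f'(1)=\frac N2 K_{1,-\frac2N}(\Psi)$, which vanishes by the generalized Pohozaev identity applied to the ground state. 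Differentiating once more and substituting the relation $K_{1,-\frac2N}(\Psi)=0$ (which expresses $\sum_{j,k}a_{jk}\int|\Psi_j\Psi_k|^p\,dx$ in terms of $\sum_j\|\nabla\Psi_j\|^2$) gives $f''(1)=(2-N(p-1))\sum_{j=1}^m\|\nabla\Psi_j\|^2$; this is strictly negative since $p>p_*$ forces $N(p-1)>2$ and $\Psi\neq0$ gives $\sum_j\|\nabla\Psi_j\|^2>0$. Thus $f$ has a strict local maximum at $\lambda=1$, so for $\lambda>1$ close to one $S(\Psi_\lambda)=f(\lambda)<f(1)=S(\Psi)$, hence $S({\bf u}_\lambda(t))<S(\Psi)$.

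For the second inequality the same substitution yields the clean formula $K_{1,-\frac2N}(\Psi_\lambda)=\frac2N\bigl(\lambda^2-\lambda^{N(p-1)}\bigr)\sum_{j=1}^m\|\nabla\Psi_j\|^2$, which is $<0$ for $\lambda>1$ because $N(p-1)>2$. Together with $S(\Psi_\lambda)<m$ this shows $\Psi_\lambda\in A_{1,-\frac2N}^-$. Since $A_{\alpha,\beta}^-$ is invariant under the flow of \eqref{S} (Lemma \ref{lem}), we conclude ${\bf u}_\lambda(t)\in A_{1,-\frac2N}^-$ for all $t\in[0,T^*)$, and in particular $K_{1,-\frac2N}({\bf u}_\lambda(t))<0$.

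The only genuinely delicate points are bookkeeping: one must use that the \emph{action} (not the energy) is the conserved quantity, and must invoke the Pohozaev identity in the precise form that makes $f'(1)=0$. Once these are in place, the argument is essentially a second–order Taylor expansion of $f$ at $\lambda=1$ together with the already–established invariance of $A_{1,-\frac2N}^-$, so I do not anticipate a substantial obstacle.
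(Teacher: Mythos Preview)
Your proof is correct and follows essentially the same route as the paper. The paper packages your scaling computation as an application of Lemma~\ref{cle} (which yields $S(\Psi_\lambda)<S(\Psi)$ and $K_{1,-\frac2N}(\Psi_\lambda)<0$ for $\lambda>1$), and then, instead of invoking the invariance Lemma~\ref{lem}, argues directly that $K_{1,-\frac2N}({\bf u}_\lambda(t))$ can never vanish (since $S({\bf u}_\lambda(t))<m$) and so stays negative by continuity; but this is precisely the content of Lemma~\ref{lem}, so the two arguments coincide.
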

\begin{proof}
By Lemma \ref{cle}, we have  
$$S(\Psi_{\lambda})<S(\Psi)\quad\mbox{and}\quad K_{1,-\frac2N}(\Psi_{\lambda})<0.$$ 
Moreover, thanks to the conservation laws, it follows that for any $t>0$, 
$$S({\bf u}_\lambda(t))=S(\Psi_{\lambda}(t))<S(\Psi).$$
Then $K_{1,-\frac2N}({\bf u}_\lambda(t))\neq0$ because $\Psi$ is a ground state. Finally $K_{1,-\frac2N}({\bf u}_\lambda(t))<0$ with a continuity argument.
\end{proof}
Now, we are ready to prove the instability result.\\
{\bf Proof of Theorem \ref{t3}}.
Take ${\bf u}_{\lambda}\in C_{T^*}(H)$ the maximal solution to \eqref{S} with data $\Psi_{\lambda}$, where $\lambda>1$ is close to one and $\Psi$ is a ground state solution to \eqref{E}. With the previous Lemma, we get 
$${\bf u}_\lambda(t)\in A_{1,-\frac2N}^-, \quad\mbox{for any}\quad t\in(0,T^*).$$
Then, using Theorem \ref{t2}, it follows that
$$\lim_{t\rightarrow T^*}\|{\bf u}_\lambda(t)\|_H=\infty.$$
The proof is finished via the fact that 
$$\lim_{\lambda\rightarrow1}\|\Psi_\lambda-\Psi \|_H=0.$$



\begin{thebibliography}{99}

\bibitem{Adams}{\bf R. Adams}, {\em Sobolev spaces}, Academic Press. New York, (1975).

\bibitem{ak} {\bf N. Akhmediev, A. Ankiewicz}, {\em Partially coherent solitons on a finite background}. Phys. Rev. Lett. Vol. 82, 2661, (1999).

\bibitem{ambrosetti 1} {\bf A. Ambrosetti and E. Colorado}, {\em Standing waves of some coupled nonlinear Schr\"odinger equations}, J. London Math. Soc. Vol. 75, 67-82, (2007).

\bibitem{AC2}{\bf A. Ambrosetti and E. Colorado}, {\em Bound and ground states of coupled nonlinear Schr\"odinger
equations}, C. R. Math. Acad. Sci. Vol. 342, no. 7, 453-458, (2006).


\bibitem{AC3}{\bf T. Bartsch and Z.-Q. Wang}, {\em Note on ground states of nonlinear Schr\"odinger equations}, J. Partial Differ. Equ. Vol. 19, 200-207, (2006).

\bibitem{AC4} {\bf T. Bartsch, Z.-Q. Wang, and J.C. Wei}, {\em Bound states for a coupled Schr\"odinger system}, J. Fixed Point Theory Appl. Vol. 2, 353-367, (2007).

\bibitem{J.B1}{\bf J. Bourgain}: {\it Global well-posedness of defocusing critical nonlinear Schr\"odinger equation in the radial case}, J. Amer. Math. Soc. Vol. 12, no. 1, 145-171, (1991).

\bibitem{J.B2}{\bf J. Bourgain}: {\it Global solutions of nonlinear Schr\"odinger equation}, American Mathematical Society Colloquium Publications, 46. American Mathematical Society, Providence, RI, (1991).

\bibitem{Cas.F}{\bf T. Cazenave and F. B. Weissler}, {\it Critical nonlinear Schr\"odinger equation}, Non. Anal. TMA, {14}, 807-836, (1990).

\bibitem{Col.K}{\bf J. Colliander, M. Keel, G. Staffilani, H. Takaoka and T. Tao}: {\it Global existence and scattering for rough solutions of a nonlinear Schr\"odinger equation on $\R^3$}, Comm. Pur. Appl. Math, Vol. 57, no. 8, 987-1014, (2004).



\bibitem{Hasegawa}{\bf A. Hasegawa and F. Tappert},{\em Transmission of stationary nonlinear optical pulses in dispersive dielectric fibers II. Normal dispersion}, Appl. Phys. Lett. Vol. 23, 171-172, (1973). 

\bibitem{hs}{\bf F. T. Hioe and T. S. Salter}, {\em Special set and solutions of coupled nonlinear Sch\"odinger
equations}, J. Phys. A: Math. Gen, Vol. 35, 8913-8928, (2002).



\bibitem{sl1}{\bf S. Le Coz}, {\em Standing waves in nonlinear Schr\"odinger equations}, Analytical and Numerical Aspects of Partial Differential Equations, 151-192, (2008).



\bibitem{Lions}{\bf P. L. Lions}, {\em Sym\'etrie et compacit\'e dans les espaces de Sobolev}, J. Funct. Anal. Vol. 49, no. 3, 315-334, (1982).



\bibitem{mz}{\bf L. Ma and L. Zhao}, {\em Sharp thresholds of blow-up and global existence for the coupled nonlinear Schr\"odinger system}, J. Math. Phys, Vol. 49, 062103, (2008).


\bibitem{ntds}{\bf Nghiem V. Nguyen, Rushun Tian, Bernard Deconinck, and Natalie Sheils}, {\em Global existence for a coupled system of Schr\"odinger equations with power-type nonlinearities}, J. Math. Phys, Vol. 54, 011503, (2013).


\bibitem{gn}{\bf L. Nirenberg}, {\em Remarks on strongly elliptic partial differential equations}, Commun. Pure Appl. Math, Vol. 8, 648-674, (1955).


\bibitem{RV07}{\bf E. Ryckman and M. Visan}: {\it Global well-posedness and scattering for the defocusing energy-critical nonlinear Schr\"odinger equation in $\R^{1+4}$},  Amer. J. Math, Vol. 129, no. 1, 1-60, (2007).

\bibitem{ps}{\bf L. E. Payne and D. H. Sattinger}, {\em Saddle Points and Instability of Nonlinear Hyperbolic Equations}, Israel Journal of Mathematics, Vol. 22, 273-303, (1975).

\bibitem{T}{\bf T. Saanouni}, {\it Global well-posedness and scattering of a $2D$ Schr\"odinger equation with exponential growth}, Bull. Belg. Math. Soc. Simon Stevin, Vol. 17, 441-462, (2010).

\bibitem{T3}{\bf T. Saanouni}: {\em Remarks on the semilinear Schr\"odinger equation}, J. Math. Anal. Appl. Vol. 400, 331-344, (2013).

\bibitem{T1}{\bf T. Saanouni}: {\it Scattering for a $2d$ Schr\"odinger equation with exponential growth in the conformal space}, Math. Method. App. Sci, 33, no. 8, 1046-1058, (2010).


\bibitem{T2}{\bf T. Saanouni}, {\it A note on coupled nonlinear Schr\"odinger equations}, Advances in Nonlinear Analysis, Vol. 3, no. 4, 247-269, (2014).

\bibitem{saa}{\bf T. Saanouni}, {\it On focusing coupled nonlinear Schr\"odinger equations}, arXiv:1505.07059v1 [math.AP].

\bibitem{xs}{\bf X. Song}, {\em Stability and instability of standing waves to a system of Schr\"odinger
equations with combined power-type nonlinearities}, J. Math. Anal. Appl, Vol. 366, 345-359, (2010).




\bibitem{V}{\bf M. Visan}, {\it The defocusing energy-critical nonlinear Schr\"odinger equation in higher dimensions}, Duke Math. J. Vol. 138, no. 2,   281-374, (2007).



\bibitem{w}{\bf Y. Xu}, {\em Global well-posedness, scattering, and blowup for nonlinear coupled Schr\"odinger equations in $\R^3$}, to appear in Applicable Analysis.

\bibitem{Zakharov} { \bf V. E. Zakharov}, {\em Stability of periodic waves of finite amplitude on the surface of a deep fluid}. Sov. Phys. J. Appl. Mech. Tech. Phys. Vol. 4, 190-194, (1968).



\end{thebibliography}
\end{document}